\documentclass[a4paper,11pt,english]{article}
\usepackage{babel}
\usepackage{authblk} 
\usepackage[latin1]{inputenc}
\usepackage[pagewise]{lineno}
\usepackage[percent]{overpic} 

\author[1]{Juli\'an L\'opez-G\'omez\thanks{Corresponding author: jlopezgo@ucm.es}}
\author[1]{Alejandro Sahuquillo\thanks{alejsahu@ucm.es}}
\affil[1]{\small Universidad Complutense de Madrid\\ Departamento de An\'alisis Matem\'atico y Matem\'atica Aplicada\\ Plaza de las Ciencias 3\\ 28040 Madrid, Spain}

\title{\textbf{Limiting behavior of principal eigenvalues for a class of mixed boundary value problems as the measure of the support domain goes to zero }}

\date{\today}

\usepackage{amsmath,amsfonts,amssymb}
\usepackage{yhmath,mathrsfs,yfonts,arcs}
\usepackage{amsthm}
\usepackage{multirow}
\usepackage{graphicx}
\usepackage[percent]{overpic} 
\usepackage{paralist}
\usepackage[titletoc,title]{appendix}
\usepackage{cite}
\usepackage{leftidx}
\usepackage{soul}
\usepackage{color}
\usepackage{bbm}
\usepackage{mathtools}
\mathtoolsset{showonlyrefs}
\usepackage{subcaption}
\usepackage{enumitem}
\usepackage{hyperref}

\theoremstyle{plain}
\newtheorem{theorem}{Theorem}[section]
\newtheorem{proposition}[theorem]{Proposition}

\theoremstyle{definition}

\theoremstyle{remark}
\numberwithin{equation}{section}

\usepackage[margin=0.80in]{geometry}

\newcommand\Item[1][]{%
	\ifx\relax#1\relax  \item \else \item[#1] \fi
	\b_0bovedisplayskip=0pt\b_0bovedisplayshortskip=0pt~\vspace*{-\baselineskip}}

\newcommand{\field}[1] {\mathbb{#1}}

\newcommand{\R}{\field{R}}

\def\a{\alpha}
\def\b{\beta}

\def\D{\Delta}

\def\G{\Gamma}

\def\o{\omega}
\def\O{\Omega}
\def\p{\partial}

\def\s{\sigma}
\def\t{\theta}

\def\v{\varphi}

\def\ua{\uparrow}
\def\da{\downarrow}

\def\vt{\vartheta}

\newcommand{\mc}{\mathcal}

\makeatletter
\newcommand{\pushright}[1]{\ifmeasuring@#1\else\omit\hfill$\displaystyle#1$\fi\ignorespaces}
\newcommand{\pushleft}[1]{\ifmeasuring@#1\else\omit$\displaystyle#1$\hfill\fi\ignorespaces}
\makeatother

\begin{document}
\maketitle

\begin{abstract}

\noindent In this paper we characterize the limiting behavior of the principal eigenvalue, $\s_1[-\D,\b,\O]$, of the boundary value problem \eqref{1.1} as the Lebesgue measure of the underlying domain, $\O$, tends to zero. Naturally, the domains $\O$ are assumed to be included on a fixed open set $D$ such that $\b\in\mc{C}(D)$, and they satisfy $\bar\O\subset D$. Our main result establishes that, in the classical case when $\inf_{D}\b >0$,
$$
  \lim_{|\O|\da 0}\s_1[-\D,\b,\O] =+\infty,
$$
whereas
$$
  \lim_{|\O|\da 0}\s_1[-\D,\b,\O] =-\infty\;\;\hbox{if}\;\; \sup_{D}\b <0,
$$
which is a surprising result at the light of the classical existing theorems for Dirichlet boundary conditions.
Furthermore, in the special case when $\b>0$ is a constant, we can prove that 
$$
  \lim_{R\da 0}\left( R \s_1[-\D,\b,B_R]\right)=\b \frac{\mathrm{Area}(\p B_1)}{|B_1|},
$$
where,  we are denoting $B_\varrho:=\{x\in\R^N\;:\;|x|<\varrho\}$ for all $\varrho>0$. 
\end{abstract}

\smallskip
\noindent \textbf{Keywords:} principal eigenvalue, principal eigenfunction, mixed boundary conditions, limiting behavior as $|\O|\da 0$.

\smallskip
\noindent \textbf{2020 MSC: 35B25, 35J25, 35P15. }

\smallskip
\noindent \textbf{Acknowledgements:} This work has been supported by the Ministry of Science and Innovation of Spain under Research Grant PID2024-155890NB-I00.

\section{Introduction}
\label{sec:1}
\noindent This paper analyzes the limiting behavior of the principal eigenvalue, $\s_1[-\D,\b,\O]$, of the linear eigenvalue problem
\begin{equation}
\label{1.1}
	\left \{ \begin{array}{ll}
		-\D u=\s u \quad & \hbox{in} \;\; \O,\\[1ex]
		\frac{\p u}{\p n} + \b(x) u = 0 \quad   & \hbox{on} \;\; \p \O,  \end{array} \right.
\end{equation}
as $|\O|\da 0$, where $\O$ is a bounded domain of class $\mc{C}^{2,\a}$  of $\R^N$ ($N\geq 2$) for some $\a \in (0,1)$, $|\O|$ denotes the Lebesgue measure of $\O$, $n$ stands for the outward unit normal vector field along $\p\O$. Throughout this paper, $D \subset \R^N$ is an open set such that $\bar \O \subset D$ and $\b\in \mc{C}(D)$ is an arbitrary continuous function.
\par
According to, e.g., Theorem 5.1 of \cite{LG96}, or Proposition 8.6 of \cite{LG13},  which are based on the celebrated inequality of Faber \cite{F23} and Krahn \cite{K25}, it is already known that the principal eigenvalue of the associated \emph{Dirichlet} problem
\begin{equation*}
	\left \{ \begin{array}{ll}
		-\D u=\s u \quad & \hbox{in} \;\; \O,\\[1ex]
		u = 0 \quad   & \hbox{on} \;\; \p \O,  \end{array} \right.
\end{equation*}
denoted by $\s_1[-\D,\mc{D},\O]$, satisfies
\begin{equation}
\label{1.2}
  \lim_{|\O|\da 0}\s_1[-\D,\mc{D},\O]=+\infty.
\end{equation}
Naturally, this property fails to be true for the \emph{Neumann} problem
\begin{equation*}
	\left \{ \begin{array}{ll}
		-\D u=\s u \quad & \hbox{in} \;\; \O,\\[1ex]
	\frac{\p u}{\p n} = 0 \quad   & \hbox{on} \;\; \p \O,  \end{array} \right.
\end{equation*}
as, in such a case, the underlying principal eigenvalue, denoted by $\s_1[-\D,\mc{N},\O]$ in this paper, equals zero regardless the size of $\O$. Up to the best of our knowledge, the behavior of the principal eigenvalue of \eqref{1.1} as $|\O|\da 0$ remains an open problem even in the case when $\b$ is assumed to be constant on $\p\O$.
Our first result provides us with that limiting behavior in the simplest one-dimensional prototype model
\begin{equation}
\label{1.3}
	\left \{ \begin{array}{ll}
		-u''=\s u \qquad  \hbox{in} \;\; [0,L],\\[1ex]
		\mc{B}_0 u(0) = 0, \quad \mc{B}_L u(L) = 0, \end{array} \right.
\end{equation}
as $L \da 0$, where $\mc{B}_0,\; \mc{B}_L  \in \{ \mc{D}, \mc{N}, \mc{R}_\b \}$ are boundary operators of some of the following types: either $\mathcal{D}u = u$, or  $\mathcal{N}u = u'$, or
$$
   \mathcal{R}_\b u = \frac{\p u}{\p n} + \b u :=
   \left\{ \begin{array}{ll} -u'(0)+\b_0 u(0) & \quad \hbox{at}\;\;x=0,\\[1ex]
    u'(L)+\b_\o u(L) & \quad \hbox{at}\;\;x=L,\end{array}\right.
$$
where $\b_0$ and $\b_\o$ are two arbitrary constants, not necessarily positive as in the classical setting of the Sturm--Liouville theory. Note that $\mc{R}_\b=\mc{N}$ at $x=0$ (resp.
$x=L$) if $\b_0=0$ (resp. $\b_\o=0$). Actually, we can always assume that $\mc{B}_0=\mc{R}_\b$ at $x=0$ and
 $\mc{B}_L=\mc{R}_\b$ at $x=L$ by adopting the convention that
$$
    \b_0=+\infty\;\; \hbox{if}\;\; \mc{B}_0=\mc{D}, \;\; \hbox{and}\;\; \b_\o=+\infty
    \;\; \hbox{if}\;\; \mc{B}_L=\mc{D}.
$$
With these conventions in mind, throughout this paper, the principal eigenvalue of \eqref{1.3} will be denoted by
$\s_1[-D^2,\b_0,\b_\o,(0,L)]$. Note that problem \eqref{1.3} cannot be thought as a particular case of \eqref{1.1}, for some choice of $\b(x)$, because we are not imposing that $\b_0=\b_\o$. Indeed, in the one-dimensional setting of problem \eqref{1.1}, if $\O=(0,L)$, the continuity of $\b$ entails that
$$
  \lim_{L\da 0} \b(L)=\b(0),
$$
whereas in the setting of \eqref{1.3}, in general,  $\b_0\neq \b_\o$.
\par
The main result in the one-dimensional case can be stated as follows. It characterizes
whether, or not, the condition \eqref{1.2} holds true for the model \eqref{1.3} under general boundary conditions
of mixed type, not necessarily of Sturm--Liouville type.

\begin{theorem}
\label{th1.1}
The following conditions are satisfied.
\begin{enumerate}
\item[{\rm (a)}] If $\b_0 + \b_\o >0$, then
$$
\lim_{L \da 0} \s_1[-D^2,\b_0,\b_\o,(0,L)] = + \infty.
$$
\item[{\rm (b)}] If $\b_0+ \b_\o =0$, then
$$
\s_1[-D^2,\b_0,\b_\o,(0,L)] = - \b_0^2 \leq 0 \;\; \hbox{for all} \; \; L>0.
$$
\item[{\rm (c)}] If $\b_0 + \b_\o< 0$, then
$$
\lim_{L \da 0} \s_1[-D^2,\b_0,\b_\o,(0,L)] = - \infty.
$$
\end{enumerate}
\end{theorem}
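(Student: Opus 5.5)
A natural approach is to use the variational characterization together with explicit test functions and the explicit structure of the one-dimensional eigenfunctions. For finite $\b_0,\b_\o$ the principal eigenvalue is
$$
\s_1[-D^2,\b_0,\b_\o,(0,L)]=\min_{u\in H^1(0,L)\setminus\{0\}}\frac{\int_0^L (u')^2+\b_0u(0)^2+\b_\o u(L)^2}{\int_0^L u^2}.
$$
If $\b_0=+\infty$ or $\b_\o=+\infty$, the same formula holds after restricting to $u(0)=0$ or $u(L)=0$.

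\emph{Case (b).} Here $\b_\o=-\b_0$, so I simply check that $u(x)=e^{-\b_0x}$ is a positive solution. Indeed, $-u''=-\b_0^2u$. At $x=0$ we have $-u'(0)+\b_0u(0)=2\b_0\neq0$ in general, so this choice fails, and I switch to $u(x)=e^{\b_0x}$. Then
$$
-u'(0)+\b_0u(0)=0,\qquad u'(L)+\b_\o u(L)=(\b_0+\b_\o)e^{\b_0L}=0.
$$
Since $u>0$, it is the principal eigenfunction, and so $\s_1=-\b_0^2$. In case (b) both constants are finite, since they sum to $0$.

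\emph{Case (c).} Both $\b$'s are finite. I take the test function $u\equiv1$, which gives
$$
\s_1\le \frac{\b_0+\b_\o}{L}\to-\infty .
$$

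\emph{Case (a).} This is the main obstacle, because one of $\b_0,\b_\o$ may be negative. I would argue by the sign of $\s$. Suppose $\s_1\le M$ along a sequence $L\da0$; I aim for a contradiction. Write the positive eigenfunction as $u=\cos(\sqrt{\s}\,x-\theta)$ with $\sqrt\s$ real or imaginary, or handle the cases $\s=-\mu^2$ and $\s=\mu^2$ separately.

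\emph{Sub-case $\s=\mu^2>0$.} The boundary conditions read
$$
\b_0=-\mu\tan\theta,\qquad \b_\o=\mu\tan(\mu L-\theta),
$$
with $\theta\in(-\pi/2,\pi/2)$ and $\mu L-\theta\in(-\pi/2,\pi/2)$. If $\mu$ stays bounded and $L\to0$, then $\mu L\to0$ and
$$
\b_0+\b_\o=\mu\bigl[\tan(\mu L-\theta)-\tan\theta\bigr]\approx \mu^2L\,\sec^2\theta .
$$
This must equal a fixed positive number. On the other hand, $\tan\theta=-\b_0/\mu$, so $\sec^2\theta=1+\b_0^2/\mu^2$ and the product is about $(\mu^2+\b_0^2)L\to0$, a contradiction. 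For Dirichlet endpoints the argument is the same with $\theta=\pm\pi/2$ replaced accordingly; if some $\b=+\infty$, Dirichlet monotonicity lets me compare with the finite case, since $\s_1$ is increasing in $\b$.

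\emph{Sub-case $\s=-\mu^2\le0$.} Using $u=\cosh(\mu x-\theta)$, or $e^{\pm\mu x}$ when needed, I get
$$
\b_0+\b_\o=\mu\bigl[\tanh(\mu L-\theta)+\tanh\theta\bigr]\le \mu^2L\to0
$$
for bounded $\mu$, using $\tanh' \le 1$. This again contradicts $\b_0+\b_\o>0$. Hence $\s_1\to+\infty$.

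To avoid this case-by-case bookkeeping, I could instead integrate the equation directly. Integrating $-u''=\s u$ over $(0,L)$ gives
$$
\s\int_0^L u=\b_0u(0)+\b_\o u(L).
$$
Normalize $u(0)=1$. For bounded $\s$ and small $L$, the estimate $|u'|\le|\b_0|+|\s|\int|u|$ shows that $u=1+O(L)$. The identity then yields $\s L(1+o(1))=\b_0+\b_\o+O(L)$, so $\s\sim(\b_0+\b_\o)/L$. This contradicts boundedness and also gives the rate in (a) and (c). I would present this integrated argument as the main proof for finite $\b$'s, and reduce Dirichlet endpoints to it via monotonicity in $\b$.
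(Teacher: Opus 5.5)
Your parts (b) and (c) are correct, and shorter than the paper's route through the transcendental equations for $\tan$ and $\tanh$. In (b), $e^{\b_0x}$ is a positive eigenfunction when $\b_\o=-\b_0$. In (c), the test function $u\equiv1$ gives $\s_1\le(\b_0+\b_\o)/L$.

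The gap is the \emph{sign} in (a). You start from ``$\s_1\le M$ along a sequence'', but both of your arguments only exclude \emph{bounded} $\s_1$.

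\emph{Integrated argument.} The estimate $u=1+O(L)$ needs $|\s|$ bounded, or at least $|\s|L^2\to0$, in the Gronwall step. So it yields $|\s_1(L)|\to\infty$, not $\s_1(L)\to+\infty$. The rate $\s\sim(\b_0+\b_\o)/L$ is obtained under a hypothesis that is then contradicted, so it is not proved either.

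\emph{Sub-case $\s=-\mu^2$.} You again assume $\mu$ bounded, which $\s_1\le M$ does not provide. This sub-case also has a sign error: for $u=\cosh(\mu x-\theta)$ one gets $\b_0+\b_\o=-\mu[\tanh(\mu L-\theta)+\tanh\theta]<0$. More seriously, it omits the positive solutions $u=\sinh(\mu x+a)$, $a>0$, of $u''=\mu^2u$. These are precisely the principal eigenfunctions when $\s_1<0<\b_0+\b_\o$, and that situation does occur. For example, with $\b_0=1$, $\b_\o=-0.9$, $L=1$ one has $\s_1<\s_1[-D^2,1,-\frac12,(0,1)]=0$. For these eigenfunctions $\b_0+\b_\o=\mu[\coth a-\coth(a+\mu L)]$ is not bounded by $\mu^2L$.

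So nothing you wrote excludes $\s_1(L)\to-\infty$ along a subsequence. The $\s=\mu^2$ sub-case also has inconsistent signs ($u=\cos(\mu x-\theta)$ gives $\b_0=\mu\tan\theta$), though your final estimate there is right.

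There are two short repairs.

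\emph{First repair.} Combine monotonicity in $\b_\o$, which is immediate from your Rayleigh quotient, with your part (b). For finite $\b_0$ and $\b_\o>-\b_0$,
$$\s_1[-D^2,\b_0,\b_\o,(0,L)]\ge\s_1[-D^2,\b_0,-\b_0,(0,L)]=-\b_0^2.$$
Then $\s_1\le M$ does force boundedness, and your integrated argument finishes.

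\emph{Second repair.} Integrate \emph{after dividing by $u$}. The function $w:=u'/u$ satisfies $w'=-\s-w^2$, $w(0)=\b_0$, $w(L)=-\b_\o$, whence
$$\b_0+\b_\o=\s L+\int_0^L w^2\,dx .$$
Since $w$ solves an autonomous scalar ODE, it is monotone, so $|w|\le\max\{|\b_0|,|\b_\o|\}$ on $[0,L]$. This gives
$$\frac{\b_0+\b_\o}{L}-\max\{\b_0^2,\b_\o^2\}\le\s_1[-D^2,\b_0,\b_\o,(0,L)]\le\frac{\b_0+\b_\o}{L}$$
for every $L>0$ and all finite $\b_0,\b_\o$. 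This proves (a), (b) and (c) at once, together with the rate you claimed. Dirichlet endpoints then follow from your monotonicity reduction.
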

\par
The behavior of Part (c) has not been previously described in the literature. Possibly, because
in the setting of the Sturm--Liouville theory $\b_0\geq 0$ and $\b_\o\geq 0$.
\par
The main result of this paper in the multidimensional case can be stated as follows.
\begin{theorem}
\label{th1.2}
Let $\s_1[-\D,\b,\O]$ be the principal eigenvalue of \eqref{1.1}. Then,
\begin{equation}
\label{1.4}
\lim_{|\O| \da 0} \s_1[-\D,\b,\O] = + \infty \quad  \hbox{if} \; \; \inf_{D}\b >0.
\end{equation}
Moreover,
\begin{equation}
\label{1.5}
\lim_{|\O| \da 0} \s_1[-\D,\b,\O] = - \infty \quad \hbox{if} \; \; \sup_{D}\b <0.
\end{equation}
\end{theorem}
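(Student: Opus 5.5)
The plan is to work throughout with the variational characterization of the principal eigenvalue of \eqref{1.1}:
\begin{equation*}
\s_1[-\D,\b,\O]=\min_{u\in H^1(\O)\setminus\{0\}}
\frac{\int_\O |\nabla u|^2+\int_{\p\O}\b\, u^2}{\int_\O u^2}.
\end{equation*}
This minimum is valid for continuous $\b$ of any sign, because the trace term is controlled by the trace inequality. Two consequences will be used. First, $\s_1$ is monotone in $\b$. Second, any test function gives an upper bound.

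\emph{Proof of \eqref{1.5}.} Suppose $\sup_D\b=-b<0$. Taking the constant $u\equiv 1$ as test function gives
\begin{equation*}
\s_1[-\D,\b,\O]\le \frac{\int_{\p\O}\b}{|\O|}\le -b\,\frac{|\p\O|}{|\O|}.
\end{equation*}
The classical isoperimetric inequality gives $|\p\O|\ge N|B_1|^{1/N}|\O|^{(N-1)/N}$. Hence
\begin{equation*}
\s_1[-\D,\b,\O]\le -bN|B_1|^{1/N}|\O|^{-1/N}\to-\infty \quad\hbox{as } |\O|\da 0.
\end{equation*}
This part is elementary.

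\emph{Proof of \eqref{1.4}.} Set $b:=\inf_D\b>0$. Since $\b\ge b$ on $\p\O\subset D$, monotonicity gives
\begin{equation*}
\s_1[-\D,\b,\O]\ge \s_1[-\D,b,\O],
\end{equation*}
so it suffices to treat the constant coefficient $b$. The key step is the Faber--Krahn inequality for the Robin Laplacian with positive constant parameter, due to Bossel and Daners (valid for Lipschitz, hence for our $\mc{C}^{2,\a}$, domains). It states that
\begin{equation*}
\s_1[-\D,b,\O]\ge \s_1[-\D,b,B_R], \qquad |B_R|=|\O|.
\end{equation*}
So we only need $\s_1[-\D,b,B_R]\to+\infty$ as $R\da 0$.

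For the ball we use scaling. If $v(y)=u(Ry)$, then $-\D v=R^2\s v$ in $B_1$ and $\p_n v+bR\,v=0$ on $\p B_1$. Therefore
\begin{equation*}
\s_1[-\D,b,B_R]=R^{-2}\s_1[-\D,bR,B_1].
\end{equation*}
For $0<\gamma\le1$, the Rayleigh quotient with parameter $\gamma$ dominates $\gamma$ times the quotient with parameter $1$, because each of its two terms does. Hence $\s_1[-\D,\gamma,B_1]\ge\gamma\,\s_1[-\D,1,B_1]$, and the constant $\s_1[-\D,1,B_1]$ is positive (the minimizer cannot be constant). Taking $\gamma=bR$ for $R$ small,
\begin{equation*}
\s_1[-\D,b,B_R]\ge \frac{b\,\s_1[-\D,1,B_1]}{R}\longrightarrow+\infty.
\end{equation*}
Since $R\da0$ exactly when $|\O|\da0$, this proves \eqref{1.4}. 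It is also consistent with the rate $1/R$ stated in the abstract.

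The main obstacle is the comparison with the ball, i.e. the Robin Faber--Krahn inequality. This is a deep result and we will quote it rather than reprove it. If one prefers a self-contained argument, I would try to prove instead a uniform estimate of the form
\begin{equation*}
\int_\O u^2\le C(b,N)\,|\O|^{1/N}\Big(\int_\O|\nabla u|^2+b\int_{\p\O}u^2\Big).
\end{equation*}
The route would be to extend $u$ by zero and apply the $BV$ isoperimetric (Sobolev) inequality to $u^2$, which gives $\|u^2\|_{L^{N/(N-1)}(\R^N)}\le C\big(2\int_\O|u||\nabla u|+\int_{\p\O}u^2\big)$. Then use Hölder's inequality in $\O$ to gain the factor $|\O|^{1/N}$, and Cauchy--Schwarz on the term $\int_\O|u||\nabla u|$.
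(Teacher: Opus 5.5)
Your proof is correct. Its outer skeleton matches the paper's: monotonicity in $\beta$, Daners' Robin Faber--Krahn inequality to pass to balls in \eqref{1.4}, and the isoperimetric inequality in \eqref{1.5}. Both key estimates, however, are obtained differently.

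\emph{The ball step in \eqref{1.4}.} The paper writes the radial eigenfunction as a solution of $-\xi''-\frac{N-1}{r}\xi'=\sigma\xi$. It checks that the drift-free one-dimensional eigenfunction $\cos(\sqrt{\sigma_1(R)}\,r)$ is decreasing, hence a strict supersolution. The maximum-principle characterization of principal eigenvalues then gives $\sigma_1[-\Delta,\beta_\ell,B_R]>\sigma_1[-D^2,0,\beta_\ell,(0,R)]$, and the paper concludes with Theorem \ref{th1.1}(a). Your scaling identity $\sigma_1[-\Delta,b,B_R]=R^{-2}\sigma_1[-\Delta,bR,B_1]$, combined with $\sigma_1[-\Delta,\gamma,B_1]\ge\gamma\,\sigma_1[-\Delta,1,B_1]$ for $\gamma\le 1$, is shorter. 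It also gives the explicit rate $\sigma_1[-\Delta,\beta,\Omega]\ge b\,\sigma_1[-\Delta,1,B_1]/R$ with $|B_R|=|\Omega|$, which the paper only makes quantitative, for balls, in Section 5.

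\emph{The upper bound in \eqref{1.5}.} The paper solves the Neumann problem $-\Delta\phi=-1$, $\partial_n\phi=|\Omega|/\mathrm{Area}(\partial\Omega)$ using Nardi's result. It conjugates by $e^{M\phi}$ to turn the Robin condition into a Neumann one with drift and potential. Comparing with the constant-potential problem, whose principal eigenfunction is $1$, it reaches $\sigma_1[-\Delta,\beta_m,\Omega]<\beta_m\,\mathrm{Area}(\partial\Omega)/|\Omega|$. Your constant test function yields the same bound in one line, and it applies directly to variable $\beta$.

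\emph{What each route buys.} Yours relies on self-adjointness, whereas the paper's supersolution arguments, in the ball step and in \eqref{1.5}, need no variational structure; its Faber--Krahn step is variational anyway.

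\emph{One point to make explicit.} You should state that the minimum of the Rayleigh quotient is the paper's $\sigma_1$, i.e.\ the eigenvalue with a positive eigenfunction. This holds because $|u|$ is again a minimizer and is strongly positive by the strong maximum principle and Hopf's lemma.

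Your BV--Sobolev alternative also works once $2\|u\|_2\|\nabla u\|_2$ is absorbed by Young's inequality, for $|\Omega|\le 1$. It would remove the need for Faber--Krahn altogether.
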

\par
The distribution of this paper is the following. Section 2 delivers some preliminaries,  among them, the monotonicity properties of the principal eigenvalues used throughout this paper. Section 3 is devoted to the proof of Theorem \ref{th1.1}, which relies on some elementary integration techniques. In Section 4 we deliver the proof of Theorem \ref{th1.2}, which is based on some sophisticated comparison techniques combined with some technical devices introduced in Lemma 2.1 and Theorem 2.1 of \cite{LG13}. Finally, in Section 5 we find out the asymptotic expansion of
$\s_1[-\D,\b,B_R]$ as $R\da 0$ in the special case when $\b>0$ is a constant. It turns out that
$$
  \lim_{R\da 0}\left( R \s_1[-\D,\b,B_R]\right)=\b \frac{\mathrm{Area}(\p B_1)}{|B_1|}.
$$

\section{Preliminaries}
\label{sec2}
This section collects some preliminary results, mainly based on comparison principles, that will be used
throughout this article. We begin by recalling some general properties of principal eigenvalues and their associated eigenfunctions.
\par
Subsequently, we will denote by $\v_1$ any  principal eigenfunction associated to $\s_1[-\D,\b,\O]$, and by $\psi_1$ any principal eigenfunction of \eqref{1.3} associated with $\s_1[-D^2,\b_0,\b_\o,(0,L)]$. According to Sections 7 and 8 of \cite{LG13}, we already know that:
\begin{itemize}
\item $\s_1[-\D,\b,\O]$ and $\s_1[-D^2,\b_0,\b_\o,(0,L)]$ are the only eigenvalues of \eqref{1.1} and \eqref{1.3} that admit some positive eigenfunction;

\item $\v_1$ and $\psi_1$ are unique, up to positive multiplicative constants, and they satisfy $\v_1 \in \mc{C}^1(\bar{\O}) \cap \mc{C}^2(\O)$, $\v_1(x) > 0$ for all $x \in \bar{\O}$,
$\psi_1 \in \mc{C}^\infty[0,L]$, $\psi_1(x) >0$ for all $x \in (0,L)$, $\psi_1(0) >0$ if $\mc{B}_0\in \{\mc{N},\mc{R}_\b:\b\in\R\}$, $\psi_1'(0) > 0$ if $\mc{B}_0 = \mathcal{D}$, and
$\psi_1(L) >0$ if $\mc{B}_L\in \{\mc{N},\mc{R}_\b:\b\in\R\}$ and $\psi_1'(L) < 0$ if $\mc{B}_L = \mathcal{D}$. These properties can be summarized by saying that $\v_1$ and $\psi_1$ are strongly positive ($\v_1,\psi_1 \gg 0$);

\item any other eigenvalues of \eqref{1.1} and \eqref{1.3} are real and greater than their respective principal eigenvalues $\s_1[-\D,\b,\O]$ and $\s_1[-D^2,\b_0,\b_\o,(0,L)]$. Thus, they are strictly dominant.
\end{itemize}
Most of these properties  go back to Cano-Casanova and L\'opez-G\'omez \cite{CCLG} for general boundary conditions.
\par
Throughout this paper, a \emph{supersolution} of the problem \eqref{1.1} is any function $\overline{u}\in\mc{C}^1(\bar{\O}) \cap \mc{C}^2(\O)$ such that
\begin{equation}
	\left \{ \begin{array}{ll}
		-\D\overline{u} \geq \s \overline{u} \quad & \hbox{in} \;\; \O,\\[1ex]
		\frac{\p \overline{u}}{\p n} + \b(x) \overline{u} \geq 0 \quad   &\hbox{on} \;\; \p \O.  \end{array} \right.
\end{equation}
In such a case, $\overline{u}$ is said to be a \emph{strict supersolution} if some of these inequalities is strict. Similarly, a \emph{supersolution} of \eqref{1.3} is any function $\overline{u}\in \mc{C}^2[0,L]$ such that
\begin{equation}
	\left \{ \begin{array}{ll}
		-\overline{u}'' \geq \s \overline{u} \qquad  \hbox{in} \;\; [0,L],\\[1ex]
		\mc{B}_0 \overline{u}(0) \geq 0, \quad \mc{B}_L \overline{u}(L) \geq 0, \end{array} \right.
\end{equation}
and $\overline{u}$ is said to be a \emph{strict supersolution} if some of these inequalities is strict. The following results establish the main monotonicity properties of $\s_1[-D^2, \b_0,\b_\o,(0,L)]$ and $\s_1[-\D,\b,\O]$.

\begin{proposition}
\label{pr2.1}
The principal eigenvalue of \eqref{1.3} is increasing with respect to $\b_0\in (-\infty,+\infty]$ and
$\b_\o\in (-\infty,+\infty]$, in the sense that
\begin{equation}
\label{2.1}
\s_1[-D^2, \b_0^2 ,\b_\o^2,(0,L)] > \s_1[-D^2, \b_0^1 ,\b_\o^1,(0,L)]  \quad \hbox{if} \; \; \b_0^2 \geq  \b_0^1,\;\;
\b_\o^2 \geq  \b_\o^1, \;\; (\b_0^2,\b_\o^2)\neq (\b_0^1,\b_\o^1).
\end{equation}
\end{proposition}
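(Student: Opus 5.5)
The plan is to use the principal eigenfunction for the larger pair of coefficients as a strict positive supersolution for the smaller pair, and then to conclude with a Green-type integration against the smaller pair's eigenfunction. (The superscripts $1,2$ on $\b_0$ and $\b_\o$ are labels, not powers.)

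Write $\s^i:=\s_1[-D^2,\b_0^i,\b_\o^i,(0,L)]$ for $i=1,2$, and let $\psi^i\gg 0$ be the associated principal eigenfunctions.

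\emph{Supersolution property.} We check the boundary conditions of \eqref{1.3} for the first problem, evaluated at $\psi^2$.
\begin{itemize}
\item If $\b_0^1<+\infty$ and $\b_0^2<+\infty$, then
$$-(\psi^2)'(0)+\b_0^1\psi^2(0)=(\b_0^1-\b_0^2)\psi^2(0)\leq 0.$$
The sign is the wrong way, so we work instead with the Lagrange identity rather than with a pure supersolution argument.
\item If $\b_0^2=+\infty$, then $\psi^2(0)=0$.
\end{itemize}

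\emph{Integration identity.} Multiply $-(\psi^2)''=\s^2\psi^2$ by $\psi^1$ and $-(\psi^1)''=\s^1\psi^1$ by $\psi^2$, subtract, and integrate over $(0,L)$. This gives
$$(\s^2-\s^1)\int_0^L\psi^1\psi^2 = \big[\psi^1(\psi^2)'-\psi^2(\psi^1)'\big]_0^L .$$

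\emph{Boundary term at $x=L$.}
\begin{itemize}
\item If both coefficients are finite, the boundary conditions give $(\psi^i)'(L)=-\b_\o^i\psi^i(L)$, so the term equals $(\b_\o^1-\b_\o^2)\psi^1(L)\psi^2(L)$. Since it enters with a plus sign, its contribution is $-(\b_\o^2-\b_\o^1)\psi^1(L)\psi^2(L)\leq 0$.
\end{itemize}
That sign is wrong, so I recheck it using the left side instead. In fact $\s^2-\s^1$ should be positive, so the right-hand side must be $\geq 0$. Computing again:
$$\psi^1(\psi^2)'-\psi^2(\psi^1)' \text{ at } L \;=\; \psi^1(-\b_\o^2\psi^2)-\psi^2(-\b_\o^1\psi^1)=(\b_\o^1-\b_\o^2)\psi^1\psi^2 .$$
Together with the orientation convention $\partial u/\partial n=u'$ at $L$, this needs care. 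The signs must be verified carefully: under $-u''=\s u$, larger absorption $\b$ should increase $\s$. I expect the correct identity to read
$$(\s^2-\s^1)\int\psi^1\psi^2=(\b_\o^2-\b_\o^1)\psi^1(L)\psi^2(L)+(\b_0^2-\b_0^1)\psi^1(0)\psi^2(0).$$
I will confirm this with the test case $\b$ constant and $L$ fixed.

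\emph{Remaining boundary cases.}
\begin{itemize}
\item If $\b_\o^2=+\infty$ and $\b_\o^1<\infty$, then $\psi^2(L)=0$ and the term reduces to $-\psi^1(L)(\psi^2)'(L)>0$, because $(\psi^2)'(L)<0$ by strong positivity and $\psi^1(L)>0$.
\item If both coefficients are $+\infty$, the term vanishes.
\end{itemize}
The endpoint $x=0$ is handled symmetrically, using $(\psi^2)'(0)>0$ when the condition there is Dirichlet.

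\emph{Conclusion.} Since $\psi^1,\psi^2>0$ in $(0,L)$, we have $\int_0^L\psi^1\psi^2>0$. Each boundary contribution is nonnegative. At least one of them is strictly positive because $(\b_0^2,\b_\o^2)\neq(\b_0^1,\b_\o^1)$: at an endpoint where the coefficients differ, either both are finite and $\psi^1\psi^2>0$ there, or only the second is infinite and the Hopf-type sign applies. Dividing by $\int_0^L\psi^1\psi^2$ gives $\s^2>\s^1$.

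The main obstacle is bookkeeping the signs and the $+\infty$ conventions correctly at each endpoint. An alternative is the variational characterization $\s=\min\big(\int u'^2+\b_0u(0)^2+\b_\o u(L)^2\big)/\int u^2$, valid for finite $\b$, followed by a limiting argument for Dirichlet conditions. However, the integration identity above gives strictness directly, so I would use it.
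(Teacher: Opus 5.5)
Your argument is correct in substance but takes a different route from the paper. The identity you wrote has a dropped sign. It is not an orientation issue. Since
\[
\frac{d}{dx}\bigl[\psi^1(\psi^2)'-\psi^2(\psi^1)'\bigr]=\psi^1(\psi^2)''-\psi^2(\psi^1)''=-(\sigma^2-\sigma^1)\,\psi^1\psi^2 ,
\]
the Lagrange identity reads $(\sigma^2-\sigma^1)\int_0^L\psi^1\psi^2\,dx=-\bigl[\psi^1(\psi^2)'-\psi^2(\psi^1)'\bigr]_0^L$. Inserting $(\psi^i)'(0)=\beta_0^i\psi^i(0)$ and $(\psi^i)'(L)=-\beta_\omega^i\psi^i(L)$ gives exactly the identity you expected. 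Your Dirichlet terms, $-\psi^1(L)(\psi^2)'(L)>0$ at $L$ and $\psi^1(0)(\psi^2)'(0)>0$ at $0$, are the correct ones for this signed identity, not for the one you wrote. With the sign fixed, your endpoint case analysis, the strictness argument and the division by $\int_0^L\psi^1\psi^2>0$ all go through. The Hopf signs are elementary here: $\psi^2(L)=(\psi^2)'(L)=0$ would force $\psi^2\equiv 0$ by ODE uniqueness, and positivity in $(0,L)$ then gives $(\psi^2)'(L)<0$.

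The paper argues by comparison instead. It takes the eigenfunction $u_1$ of the \emph{smaller} pair $(\beta_0^1,\beta_\omega^1)$ and notes that $-u_1'(0)+\beta_0^2u_1(0)=(\beta_0^2-\beta_0^1)u_1(0)\ge 0$ and $u_1'(L)+\beta_\omega^2u_1(L)=(\beta_\omega^2-\beta_\omega^1)u_1(L)\ge 0$. At an endpoint that becomes Dirichlet, this is read as $u_1>0$, and strict inequality holds somewhere. So $u_1$ is a positive strict supersolution of the larger-coefficient problem at level $\sigma^1$, and Theorem~7.10 of [LG13] yields $\sigma_1[-D^2-\sigma^1,\beta_0^2,\beta_\omega^2,(0,L)]>0$, i.e.\ $\sigma^2>\sigma^1$. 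Your abandoned first attempt had the roles reversed; swapping them is precisely the paper's proof.

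Your Green-identity proof is elementary and self-contained, and it gives an explicit formula for $\sigma^2-\sigma^1$. It does, however, rely on the symmetry of $-D^2$ under these boundary conditions. The paper's route leans on a deeper external characterization theorem but uses no self-adjointness. That is why the same strict-supersolution device is reused in Section~4 for operators with first-order terms, such as $-D^2-\frac{N-1}{r}D$ and the drift operator of Section~4.2.
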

\begin{proof}
Assume that $u_1\gg 0$ and $u_2\gg 0$ are principal eigenfunctions associated to the eigenvalue problems
\begin{equation}
	\left \{ \begin{array}{ll}
		-u_1''=\s_1[-D^2, \b_0^1 ,\b_\o^1,(0,L)] u_1  \qquad  \hbox{in} \;\; [0,L],\\[1ex]
		-u_1'(0)+\b_0^1 u_1(0)= 0, \quad u_1'(L)+\b_\o^1 u_1(L)  = 0, \end{array} \right.
\end{equation}
and
\begin{equation}
	\left \{ \begin{array}{ll}
		-u_2''=\s_1[-D^2, \b_0^2 ,\b_\o^2,(0,L)] u_2  \qquad  \hbox{in} \;\; [0,L],\\[1ex]
		-u_2'(0)+\b_0^2 u_2(0)= 0, \quad  u_2'(L)+\b_\o^2 u_2(L) = 0,  \end{array} \right.
\end{equation}
respectively. In the proof we will differentiate among several cases.
\par
\vspace{0.2cm}
\noindent \emph{Case 1: $\b_0^1, \b_\o^1\in\R$.} Then, by a direct calculation, we find that
$$
-u_1'(0)+\b_0^2 u_1(0) =\left(\b_0^2 - \b_0^1\right)u_1(0) \geq 0 \quad \hbox{and} \quad u_1'(L)+\b_\o^2 u_1(L) = \left(\b_\o^2 - \b_\o^1\right)u_1(L) \geq 0,
$$
even in the case when $\b_0^2$, or $\b_\o^2$, equals $+\infty$. Since $\b_0^1, \b_\o^1\in\R$, we have that $u_1(0)>0$ and $u_1(L)>0$. Hence, one of the previous inequalities is strict because $(\b_0^2,\b_\o^2)\neq (\b_0^1,\b_\o^1)$. Thus,
$u_1$ is a positive strict supersolution of the problem
\begin{equation}
	\left \{ \begin{array}{ll}
		-u''=\s_1[-D^2, \b_0^1 ,\b_\o^1,(0,L)] u  \qquad  \hbox{in} \;\; [0,L],\\[1ex]
		-u'(0)+\b_0^2 u(0)= 0, \quad  u'(L)+\b_\o^2 u(L)  = 0. \end{array} \right.
\end{equation}
By the notational conventions in Section 1, $-u'(0)+\b_0^2 u(0)  = 0$ becomes into $u(0)=0$ if $\b_0^2=+\infty$. Similarly, $u'(L)+\b_\o^2 u(L)  = 0$ becomes into $u(L)=0$ if $\b_\o^2=+\infty$. Consequently, it follows from \cite[Th. 7.10]{LG13} (going back to L\'opez-G\'omez and
Molina-Meyer \cite{LGMM} and Amann and L\'opez-G\'omez \cite{ALG}) that
$$
\s_1[-D^2-\s_1[-D^2, \b_0^1 ,\b_\o^1,(0,L)], \b_0^2 ,\b_\o^2,(0,L)]>0.
$$
Therefore, \eqref{2.1} holds. This ends the proof in this case.
\par
\vspace{0.2cm}
\noindent \emph{Case 2: $\b_0^1=+\infty$ and $\b_\o^1\in\R$.} This entails $\b_0^2=+\infty$. Thus, by the convention adopted in Section 1, $u_1(0)=u_2(0)=0$. Moreover, since $\b_\o^1\in\R$, we find that $u_1(L)>0$ and
$$
  u_1'(L)+\b_\o^2 u_1(L) = \left(\b_\o^2 - \b_\o^1\right)u_1(L) \geq 0.
$$
On the other hand, since $\b_0^1=\b_0^2=+\infty$ and $(\b_0^2,\b_\o^2)\neq (\b_0^1,\b_\o^1)$, necessarily
$\b_\o^2>\b_\o^1$. Hence,
$$
  u_1'(L)+\b_\o^2 u_1(L) = \left(\b_\o^2 - \b_\o^1\right)u_1(L) > 0.
$$
Consequently, $u_1$ is a positive strict supersolution of the problem
\begin{equation}
	\left \{ \begin{array}{ll}
		-u''=\s_1[-D^2, +\infty,\b_\o^1,(0,L)] u  \qquad  \hbox{in} \;\; [0,L],\\[1ex]
		u(0)= 0, \quad  u'(L)+\b_\o^2 u(L)  = 0. \end{array} \right.
\end{equation}
Note that $u'(L)+\b_\o^2 u(L)  = 0$ becomes into $u(L)=0$ if $\b_\o^2=+\infty$. Therefore, in any circumstances,
it follows from \cite[Th. 7.10]{LG13} that
$$
\s_1[-D^2-\s_1[-D^2,+\infty,\b_\o^1,(0,L)], +\infty ,\b_\o^2,(0,L)]>0,
$$
which implies \eqref{2.1}.
\par
\vspace{0.2cm}
\noindent \emph{Case 3: $\b_0^1\in\R$ and $\b_\o^1=+\infty$.} This entails $\b_\o^2=+\infty$ and $\b_0^2>\b_0^1$.
Thus, $u_1(0)>0$ and $u_1(L)=u_2(L)=0$. Hence,
$$
  -u_1'(0)+\b_0^2 u_1(0) = \left(\b_0^2 - \b_0^1\right)u_1(0) > 0.
$$
So, $u_1$ is a positive strict supersolution of the problem
\begin{equation}
	\left \{ \begin{array}{ll}
		-u''=\s_1[-D^2, \b_0^1,+\infty,(0,L)] u  \qquad  \hbox{in} \;\; [0,L],\\[1ex]
		-u'(0)+\b_0^2 u(0)=0, \quad  u(L) = 0. \end{array} \right.
\end{equation}
Observe that $-u'(0)+\b_0^2 u(0)  = 0$ becomes into $u(0)=0$ if $\b_0^2=+\infty$. Therefore,
it follows from \cite[Th. 7.10]{LG13} that
$$
\s_1[-D^2-\s_1[-D^2,\b_0^1,+\infty,(0,L)], \b_0^2, +\infty,(0,L)]>0,
$$
which implies \eqref{2.1}. The proof is complete.
\end{proof}

Similarly, the next result holds.
\begin{proposition}
\label{pr2.2}
The principal eigenvalue of the problem \eqref{1.1} is increasing with respect to $\b$, in the sense that
\begin{equation}
\s_1[-\D,\b_2 ,\O] > \s_1[-\D,\b_1 ,\O] \quad \hbox{if} \; \; \b_2 \gneq \b_1\;\;\hbox{on}\;\;\p\O.
\end{equation}
\end{proposition}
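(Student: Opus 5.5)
We will mimic Case 1 in the proof of Proposition \ref{pr2.1}, now in the multidimensional setting. Let $\v_1\gg 0$ be a principal eigenfunction associated with $\s_1[-\D,\b_1,\O]$, so that
$$
-\D\v_1=\s_1[-\D,\b_1,\O]\v_1 \;\;\hbox{in}\;\;\O,\qquad \frac{\p\v_1}{\p n}+\b_1\v_1=0\;\;\hbox{on}\;\;\p\O .
$$
The plan is to show that $\v_1$ is a positive strict supersolution of the problem with weight $\b_2$ at the level $\s_1[-\D,\b_1,\O]$, and then conclude by the characterization of the strong maximum principle.

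\textbf{Step 1: boundary inequality.} A direct substitution gives
$$
\frac{\p\v_1}{\p n}+\b_2\v_1=(\b_2-\b_1)\v_1\geq 0 \quad\hbox{on}\;\;\p\O .
$$
Since $\v_1(x)>0$ for all $x\in\bar\O$, including $\p\O$, and $\b_2\gneq\b_1$ on $\p\O$, this inequality is strict on some nonempty relatively open subset of $\p\O$, by continuity of $\b_1,\b_2$. Hence $\v_1$ is a positive strict supersolution of
$$
-\D u-\s_1[-\D,\b_1,\O]u=0\;\;\hbox{in}\;\;\O,\qquad \frac{\p u}{\p n}+\b_2u=0\;\;\hbox{on}\;\;\p\O .
$$

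\textbf{Step 2: characterization of the maximum principle.} We invoke \cite[Th. 7.10]{LG13}, exactly as in the proof of Proposition \ref{pr2.1}. The existence of a positive strict supersolution yields
$$
\s_1[-\D-\s_1[-\D,\b_1,\O],\b_2,\O]>0 .
$$
Since shifting the operator by a constant shifts the principal eigenvalue by the same constant, this inequality is precisely $\s_1[-\D,\b_2,\O]>\s_1[-\D,\b_1,\O]$.

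\textbf{Main obstacle.} The only delicate point is checking that $\v_1$ meets the regularity and strictness hypotheses of \cite[Th. 7.10]{LG13} in this multidimensional Robin setting. The regularity $\v_1\in\mc{C}^1(\bar\O)\cap\mc{C}^2(\O)$ was recalled above. Strictness only requires the boundary inequality to fail to be an identity, and this is guaranteed because $\v_1>0$ on $\p\O$ and $\b_2-\b_1\gneq 0$ there.

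If a more self-contained argument were preferred, one could instead use the variational characterization
$$
\s_1[-\D,\b,\O]=\min_{u\in H^1(\O)\setminus\{0\}}\frac{\int_\O|\nabla u|^2+\int_{\p\O}\b u^2}{\int_\O u^2},
$$
which is valid for any continuous $\b$ by the trace inequality. Testing with the eigenfunction $\v_2$ associated with $\b_2$ gives
$$
\s_1[-\D,\b_1,\O]\leq \s_1[-\D,\b_2,\O]-\frac{\int_{\p\O}(\b_2-\b_1)\v_2^2}{\int_\O\v_2^2}.
$$
The last term is strictly positive because $\v_2>0$ on $\p\O$, which again gives the strict inequality.
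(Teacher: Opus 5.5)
Your Steps 1--2 are exactly the paper's argument: the principal eigenfunction for $\b_1$ is a positive strict supersolution of the $\b_2$-problem at the level $\s_1[-\D,\b_1,\O]$, and \cite[Th.~7.10]{LG13} then gives $\s_1[-\D-\s_1[-\D,\b_1,\O],\b_2,\O]>0$. Your variational alternative, testing the $\b_1$ Rayleigh quotient with the $\b_2$-eigenfunction, is also correct and yields the same strict inequality.
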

\begin{proof}
Let $\b_2 \gneq \b_1$, and $u_1 \gg  0$ and $u_2 \gg 0$ be principal eigenfunctions of
the eigenvalue problems
\begin{equation}
	\left \{ \begin{array}{ll}
		-\D u_1=\s_1[-\D,\b_1,\O]  u_1 \quad & \hbox{in} \;\; \O, \\[1ex]
		\frac{\p u_1}{\p n} + \b_1 u_1 = 0 \quad   & \hbox{on} \;\; \p \O, \end{array} \right.
\end{equation}
and
\begin{equation}
	\left \{ \begin{array}{ll}
		-\D u_2=\s_1[-\D,\b_2,\O]  u_2 \quad & \hbox{in} \;\; \O,\\[1ex]
		\frac{\p u_2}{\p n} + \b_2 u_2 = 0 \quad   & \hbox{on} \;\; \p \O,  \end{array} \right.
\end{equation}
respectively. Since $u_1(x)>0$ for all $x\in\bar \O$ and $\b_2 \gneq \b_1$, we have that
$$
\tfrac{\p u_1}{\p n} + \b_2 u_1 \gneq \tfrac{\p u_1}{\p n} + \b_1 u_1 =0.
$$
Thus, $u_1$ is a positive strict supersolution of the problem
\begin{equation}
	\left \{ \begin{array}{ll}
		-\D u=\s_1[-\D,\b_1,\O]  u \quad & \hbox{in} \;\; \O,\\[1ex]
		\frac{\p u}{\p n} + \b_2 u = 0 \quad   & \hbox{on} \;\; \p \O.  \end{array} \right.
\end{equation}
Therefore, again by \cite[Th. 7.10]{LG13}, we obtain that
$$
\s_1[-\D- \s_1[-\D,\b_1,\O],\b_2,\O] >0,
$$
which concludes the proof.
\end{proof}

The next result establishes a useful symmetry property of $\s_1[-D^2,\b_0,\b_\o,(0,L)]$.

\begin{proposition}
\label{pr2.3}
The principal eigenvalue of \eqref{1.3} is symmetric with respect to $\b_0\in(-\infty,+\infty]$ and $\b_\o\in(-\infty,+\infty]$, in the sense that
$$
\s_1[-D^2,\b_0,\b_\o,(0,L)] = \s_1[-D^2,\b_\o,\b_0,(0,L)].
$$
\end{proposition}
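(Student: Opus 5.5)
The symmetry follows from the reflection $x\mapsto L-x$, which maps $[0,L]$ onto itself and swaps the two endpoints. Let $\psi_1\gg 0$ be a principal eigenfunction associated with $\s_1[-D^2,\b_0,\b_\o,(0,L)]$, and set
$$
  \tilde\psi(x):=\psi_1(L-x),\qquad x\in[0,L].
$$
The plan is to show that $\tilde\psi$ is a strongly positive eigenfunction of \eqref{1.3} with the roles of $\b_0$ and $\b_\o$ interchanged, for the same eigenvalue. Then the uniqueness property recalled at the beginning of this section (the principal eigenvalue is the only eigenvalue admitting a positive eigenfunction) gives the identity at once.

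The first step is the differential equation. Since $\tilde\psi''(x)=\psi_1''(L-x)$, we get $-\tilde\psi''=\s_1[-D^2,\b_0,\b_\o,(0,L)]\,\tilde\psi$ in $[0,L]$. Moreover $\tilde\psi\in\mc{C}^\infty[0,L]$ and $\tilde\psi>0$ in $(0,L)$.

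The second step is the boundary conditions, treating separately finite and infinite $\b$'s. Using $\tilde\psi'(x)=-\psi_1'(L-x)$:
\begin{itemize}
\item At $x=0$: $-\tilde\psi'(0)+\b_\o\tilde\psi(0)=\psi_1'(L)+\b_\o\psi_1(L)=0$ when $\b_\o\in\R$. When $\b_\o=+\infty$, we have $\tilde\psi(0)=\psi_1(L)=0$.
\item At $x=L$: $\tilde\psi'(L)+\b_0\tilde\psi(L)=-\psi_1'(0)+\b_0\psi_1(0)=0$ when $\b_0\in\R$. When $\b_0=+\infty$, we have $\tilde\psi(L)=\psi_1(0)=0$.
\end{itemize}
So $\tilde\psi$ satisfies \eqref{1.3} with boundary parameters $(\b_\o,\b_0)$. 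Strong positivity also transfers. For a finite parameter, positivity at the corresponding endpoint is inherited directly. For an infinite one, $\psi_1'(0)>0$ gives $\tilde\psi'(L)<0$, and $\psi_1'(L)<0$ gives $\tilde\psi'(0)>0$. Hence $\tilde\psi\gg0$.

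By the characterization of the principal eigenvalue as the unique eigenvalue with a positive eigenfunction, it follows that $\s_1[-D^2,\b_\o,\b_0,(0,L)]=\s_1[-D^2,\b_0,\b_\o,(0,L)]$. There is no real obstacle here. The only care needed is the sign change of the derivative under reflection, which is exactly what makes the outward normal derivatives at the two ends correspond, together with the bookkeeping for the conventions $\b=+\infty$.
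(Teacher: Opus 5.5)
Your proof is correct and follows the paper's argument: reflect via $x\mapsto L-x$, check that the reflected principal eigenfunction is a positive eigenfunction of the problem with $\b_0$ and $\b_\o$ interchanged for the same eigenvalue, and conclude by uniqueness of the eigenvalue admitting a positive eigenfunction. Your explicit treatment of the cases $\b_0=+\infty$ or $\b_\o=+\infty$ is a small addition that the paper leaves implicit.
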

\begin{proof}
Let $u_1 \gg 0$ be a principal eigenfunction of \eqref{1.3}. Then, the change of variable
\begin{equation}
\label{2.2}
 v_1(y):= u_1(x) \quad \hbox{with} \; \; y := L-x, \; \; y \in [0,L],
\end{equation}
yields $u_1(0) = v_1(L)$ and $u_1(L) = v_1(0)$. Moreover, $u_1'(x) = -v_1'(y)$ and $u_1''(x) = v_1''(y)$ for all $x\in [0,L]$. In particular,
$$
  u_1'(0) = -v_1'(L),\quad u_1'(L) = -v_1'(0).
$$
Thus, the problem \eqref{1.3} becomes
\begin{equation}
\label{2.3}
	\left \{ \begin{array}{ll}
		-v_1''=\s_1[-D^2, \b_0 ,\b_\o,(0,L)] v_1  \qquad  \hbox{in} \;\; [0,L],\\[1ex]
		-v_1'(0)+\b_\o v_1(0)= 0, \quad  v_1'(L)+\b_0 v_1(L)  = 0. \end{array} \right.
\end{equation}
Since $u_1 \gg 0$, \eqref{2.2} implies that $v_1 \gg 0$. Hence, $v_1$ is a principal eigenfunction of \eqref{2.3} and, consequently, the uniqueness of the principal eigenvalue entails that
$$
\s_1[-D^2,\b_\o,\b_0,(0,L)] = \s_1[-D^2,\b_0,\b_\o,(0,L)].
$$
The proof is complete.
\end{proof}

\section{Proof of Theorem \ref{th1.1}}
\label{sec3}
The proof relies on the fact that the principal eigenvalue can be estimated explicitly. Note that, for every
$\s \in \R$, the general solution of the differential equation
\begin{equation}
\label{3.1}
-u'' = \s u
\end{equation}
is given by
\begin{equation}
	u(x) = \left \{ \begin{array}{ll}
		A \cos\left(\sqrt{\s}x\right) + B \sin\left(\sqrt{\s}x\right) \quad & \hbox{if} \;\; \s >0, \\[1ex]
		Ax+ B \quad   & \hbox{if} \;\; \s =0, \\[1ex]
		A \cosh\left(\sqrt{-\s}x\right) + B \sinh\left(\sqrt{-\s}x\right) \quad & \hbox{if} \; \; \s <0,
		\end{array} \right.
\end{equation}
where  $A,B \in \R$ are arbitrary constants. Moreover, multiplying by $u$ and integrating by parts \eqref{3.1} in $[0,L]$ yields to
\begin{equation}
\label{3.2}
-u(L)u'(L) + u(0)u'(0) + \int_0^L (u')^2 \, dx= \s \int_0^L u^2 \, dx.
\end{equation}
The proof is divided into 6 different cases.

\subsection{Case 1: $\mc{B}_0, \mc{B}_L \in \{ \mc{D}, \mc{N} \}$}
Suppose $\mc{B}_0=\mc{B}_L=\mc{N}$, i.e., $\b_0=\b_\o=0$. Then, $\s_1[-D^2,0,0,(0,L)]=0$, regardless the size of $L>0$,  and, hence, in this case, Part (b) holds true. Thus, throughout the remaining of this case we will assume that
$\{\mc{B}_0,\mc{B}_L\}\neq \{\mc{N}\}$. Hence, $\b_0,\b_\o \in \{0,+\infty \}$ with $(\b_0,\b_\o)\neq (0,0)$.
Since $\mc{B}_0, \mc{B}_L \in \{ \mc{D}, \mc{N} \}$,
$$
   -u(L)u'(L) + u(0)u'(0) = 0.
$$
Consequently, it follows from  \eqref{3.2} that any eigenvalue $\s$ of \eqref{1.3} must satisfy $\s \geq 0$.
Suppose $\s=0$. Then, by \eqref{3.2}, $u'=0$ in $(0,L)$ and, hence, there is some constant $C\in\R$ such that $u(x)=C$ for all $x\in [0,L]$. In particular, $u(0)=u(L)=C$. Consequently, as we are assuming that $\mc{B}_0=\mc{D}$, or $\mc{B}_L=\mc{D}$, we find that $C=0$. Therefore, any eigenvalue of \eqref{1.3} must be positive. Furthermore, based
on the expression of the admissible eigenfunctions collected in \eqref{3.2} we are easily driven to the sequence of eigenvalues of \eqref{1.3}
\begin{equation}
	\s_n= \left \{ \begin{array}{ll}
		\left(\tfrac{n \pi}{L}\right)^2 \quad & \hbox{if } \;\; \mc{B}_0 = \mc{B}_L = \mc{D}, \\[1ex]
		\left(\tfrac{(2n-1) \pi}{2L}\right)^2 \quad & \hbox{if} \;\; \mc{B}_0 = \mc{D} \; \; \hbox{and} \; \; \mc{B}_L = \mc{N}, \quad \hbox{or} \quad \mc{B}_0 = \mc{N} \; \; \hbox{and} \; \; \mc{B}_L = \mc{D}, \end{array} \right.
\end{equation}
for $n = 1,2,3, \dots$ Thus,
\begin{equation}
	\s_1[-D^2, \b_0,\b_\o,(0,L)]= \left \{ \begin{array}{ll}
		\left(\tfrac{\pi}{L}\right)^2 \quad & \hbox{if } \;\; \mc{B}_0 = \mc{B}_L = \mc{D}, \\[1ex]
		\left(\tfrac{\pi}{2L}\right)^2 \quad & \hbox{if} \;\; \mc{B}_0 = \mc{D} \; \; \hbox{and} \; \; \mc{B}_L = \mc{N}, \quad \hbox{or} \quad \mc{B}_0 = \mc{N} \; \; \hbox{and} \; \; \mc{B}_L = \mc{D}. \end{array} \right.
\end{equation}
Therefore,
$$
\lim_{L \da 0} \s_1[-D^2, \b_0,\b_\o,(0,L)] = +\infty.
$$

\subsection{Case 2: $\mc{B}_0 = \mc{R}_\b$, with $\b_0 \in\R\setminus\{0\}$, and $\mc{B}_L = \mc{D}$}
In this case, the problem \eqref{1.3} reads as
\begin{equation}
\label{3.3}
	\left \{ \begin{array}{ll}
		-u''=\s u \qquad \hbox{in} \;\; [0,L],\\[1ex]
		-u'(0) + \b_0 u(0)= 0, \quad u(L) = 0. \end{array} \right.
\end{equation}
First, suppose that \eqref{3.3} admits the eigenvalue $\s = 0$. Then, since $u(x)=Ax+B$ for some constants $A, B\in\R$,
imposing the boundary conditions we are led to
$$
\b_0 = -\tfrac{1}{L} \;\; \hbox{and} \;\; u(x) = B\left(-\tfrac{x}{L}+1\right)
$$
for all $x\in [0,L]$ and $B\in\R$. Since $u(x)>0$ for all $x\in [0,L)$ with $B>0$, by the uniqueness
of the principal eigenvalue, it is apparent that, for every $L>0$,
$$
\s_1\left[-D^2, -\tfrac{1}{L},+\infty,(0,L)\right] = 0,
$$
with associated principal eigenfunction $-\tfrac{x}{L}+1$ for all $x\in [0,L]$. Thus, by Proposition \ref{pr2.1},
we find that, for every $\b_0\in\R\setminus\{0\}$,
$$
\s_1(L):= \s_1[-D^2, \b_0,+\infty,(0,L)] > 0 \quad \hbox{if either}\;\; \b_0>0,\;\; \hbox{or}\;\; \b_0<0\;\;
\hbox{and} \;\; L<-\tfrac{1}{\b_0}.
$$
Hence, as we are simply interested in ascertaining the behavior of $\s_1(L)$ as $L\da 0$, without loss
of generality, we can assume that $\s_1(L)>0$. So, the principal eigenfunction of \eqref{3.3}, denoted by $\psi_1$,
must be given by
$$
  \psi_1(x)=A \cos\left(\sqrt{\s_1(L)}\,x\right) + B \sin\left(\sqrt{\s_1(L)}\,x\right),\quad x\in [0,L],
$$
for the appropriate values of $A$ and $B$. Imposing the boundary conditions of \eqref{3.3}, it becomes apparent that
$$
  B\neq 0, \quad A = \tfrac{\sqrt{\s_1(L)}}{\b_0}B, \quad \tfrac{\sqrt{\s_1(L)}}{\b_0}\cos\left(\sqrt{\s_1(L)}L\right)+
  \sin \left(\sqrt{\s_1(L)}L\right)=0.
$$
From the last identity, it is easily seen that $\cos\left(\sqrt{\s_1(L)}L\right)\neq 0$. Consequently,
$\s_1(L)$ is the first positive root of
\begin{equation}
\label{3.4}
\tan\left(\sqrt{\s_1(L)}\,L\right) = -\tfrac{\sqrt{\s_1(L)}}{\b_0}.
\end{equation}
Setting $s:= \sqrt{\s_1(L)}\,L >0$, \eqref{3.4}  becomes
$$
\tan s = -\frac{s}{L \b_0}.
$$
Thus, $\s_1(L)=\left(\frac{s}{L}\right)^2$ is determined by the first positive intersection between the graphs of $\tan s$ and the straight line $-\frac{s}{L \b_0}$, as illustrated in Figure \ref{fig1}, where we are representing the line $-\frac{s}{L \b_0}$ in the two admissible cases when $\b_0<0$ and $\b_0>0$. In the first case, the first crossing point between $\tan s$ and the line $-\frac{s}{L \b_0}$ is $s(L)<\frac{\pi}{2}$, while $s(L)>\frac{\pi}{2}$ if $-\frac{1}{L\b_0}<0$. In each of these situations, it becomes apparent from the construction (see
Figure \ref{fig1}) that $\lim_{L\da 0}s(L)=\frac{\pi}{2}$. Consequently,
$$
\s_1(L)=\left(\tfrac{s(L)}{L}\right)^2 \sim \left(\tfrac{\pi}{2L}\right)^2 \quad \hbox{as} \; \, L \da 0.
$$
Therefore,
\begin{equation}
\lim_{L \da 0} \s_1[-D^2, \b_0,+\infty,(0,L)] = +\infty.
\end{equation}

\begin{figure}[h!]
	\centering
		\begin{overpic}[scale=0.45]{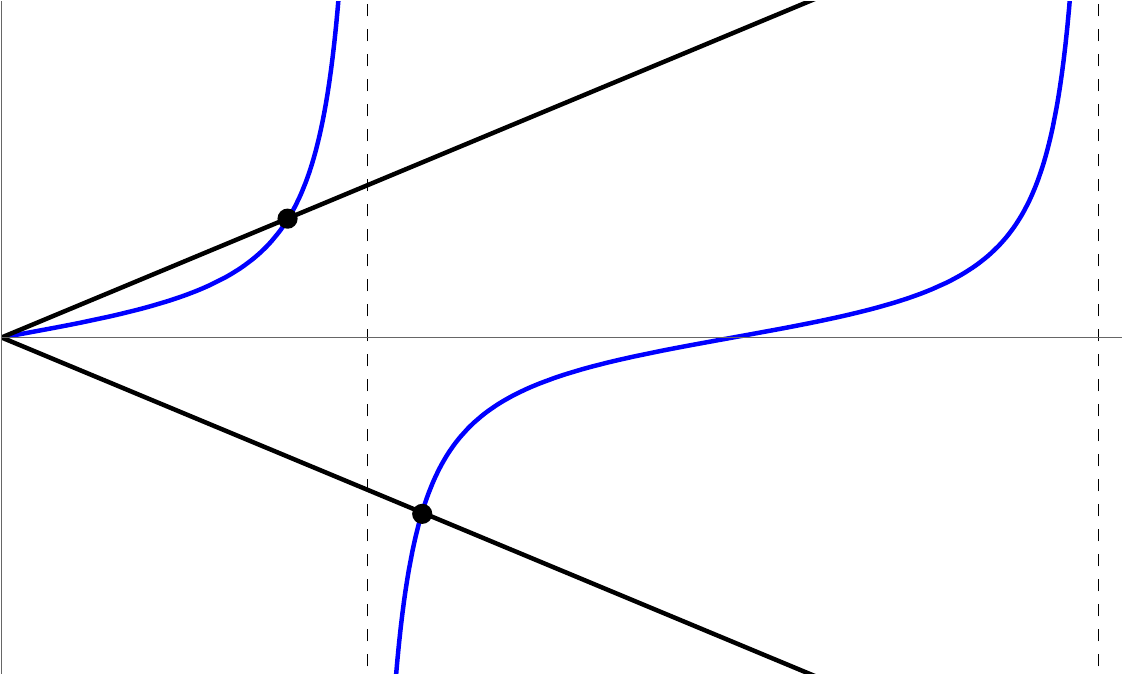}
		\put (18,42) {\scriptsize$s(L)$}
		\put (40,15) {\scriptsize$s(L)$}
		\put (29,27) {\scriptsize$\frac{\pi}{2}$}
		\put (93,27) {\scriptsize$\frac{3\pi}{2}$}
		\put (102,30) {\scriptsize$s$}
		\put (92,62) {\scriptsize$\tan s$}
		\put (72,62) {\scriptsize$-\frac{s}{L \b_0}$}
		\put (72,2) {\scriptsize$-\frac{s}{L \b_0}$}
	\end{overpic}
	\caption{The graphs of the line $-\frac{s}{L \b_0}$, in black, and $\tan s$, in blue.}
	\label{fig1}
\end{figure}

\subsection{Case 3: $\mc{B}_0 = \mc{D}$ and $\mc{B}_L = \mc{R}_\b$, with $\b_\o \in \R \setminus \{0\}$}
In this case, according to Proposition \ref{pr2.3}, we have that
$$
   \s_1[-D^2, +\infty,\b_\o,(0,L)]= \s_1[-D^2, \b_\o,+\infty,(0,L)]
$$
and, hence, it follows from Case  2 that
$$
\lim_{L \da 0} \s_1[-D^2, +\infty,\b_\o,(0,L)] = +\infty.
$$

\subsection{Case 4: $\mc{B}_0 = \mc{R}_\b$, with $\b_0 \in \R \setminus \{0\}$, and $\mc{B}_L = \mc{N}$}
In this case, the problem \eqref{1.3} becomes
\begin{equation}
\label{3.5}
	\left \{ \begin{array}{ll}
		-u''=\s u \qquad  \hbox{in} \;\; [0,L],\\[1ex]
		-u'(0) + \b_0 u(0)= 0, \quad u'(L) = 0. \end{array} \right.
\end{equation}
Since $\s_1[-D^2, 0,0,(0,L)]=0$, it follows from Proposition \ref{pr2.1} that
\begin{equation}
	\s_1(L) :=\s_1[-D^2, \b_0,0,(0,L)] \left \{ \begin{array}{ll}
		>0 \quad & \hbox{if} \;\; \b_0 >0,\\[1ex]
		<0 \quad & \hbox{if} \;\; \b_0 <0. \end{array} \right.
\end{equation}
Subsequently, we distinguish two different cases depending on the sign of $\b_0$.
\par
\medskip
\noindent \emph{Case $\b_0>0$.} Then, the principal eigenfunction $\psi_1$ of \eqref{3.5} inherits the form
\begin{equation}
\label{3.6}
  \psi_1(x)=A \cos\left(\sqrt{\s_1(L)}\,x\right) + B \sin\left(\sqrt{\s_1(L)}\,x\right),\quad x\in [0,L],
\end{equation}
for appropriate constants $A$ and $B$. Thus, adapting the argument of Case 2, we have that $\s_1(L)$ is the first positive solution of
$$
\tan \left(\sqrt{\s_1(L)}L\right) = \tfrac{\b_0}{\sqrt{\s_1(L)}},
$$
which, setting $s:= \sqrt{\s_1(L)}>0$, becomes
\begin{equation}
\label{3.7}
\tan \left(sL\right) = \tfrac{\b_0}{s}.
\end{equation}
Figure \ref{fig2} shows the plots of the curves $\tfrac{\b_0}{s}$, and $\tan (sL)$ for two different values of $L$.
\par
\vspace{3mm}
\begin{figure}[h!]
	\centering
	\begin{overpic}[scale=0.45]{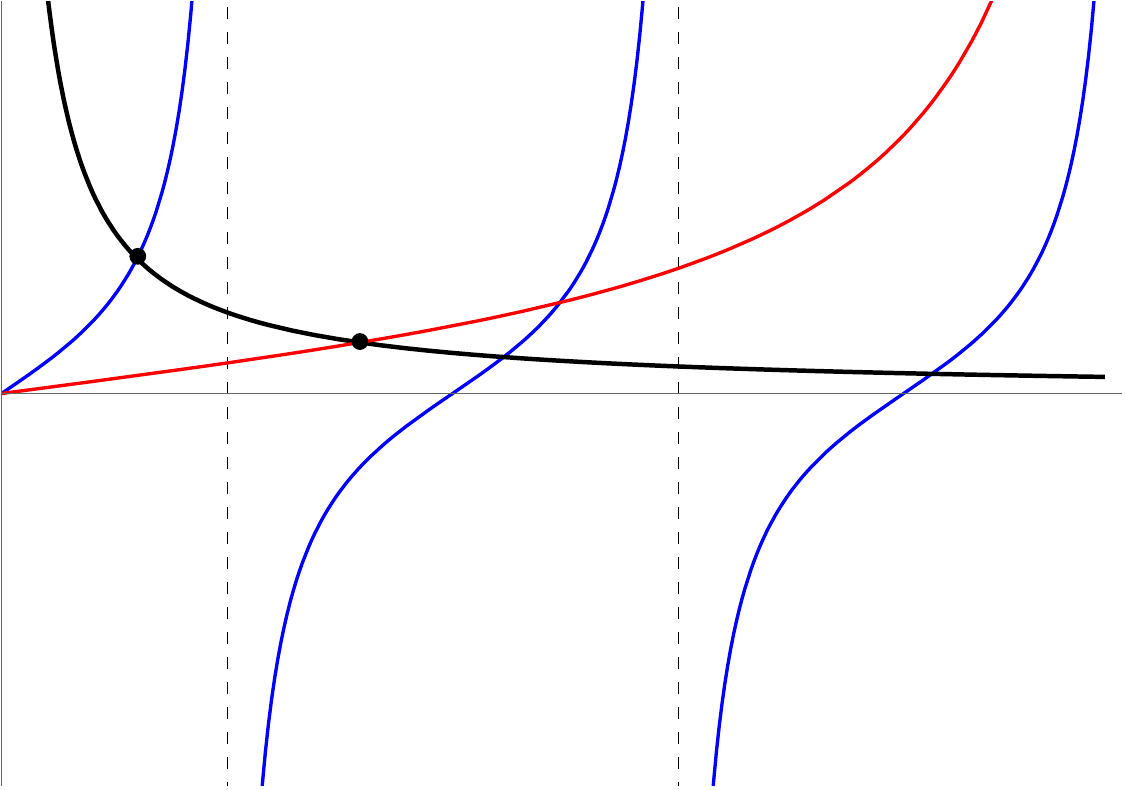}
		\put (2,46) {\tiny$s(L_1)$}
		\put (28.5,42) {\tiny$s(L_2)$}
		\put (16,31) {\scriptsize$\frac{\pi}{2}$}
		\put (55,31) {\scriptsize$\frac{3\pi}{2}$}
		\put (102,34) {\scriptsize$s$}
		\put (71,68) {\scriptsize$\tan \left(sL_2\right)$}
		\put (99.5,68) {\scriptsize$\tan \left(sL_1\right)$}
		\put (1,73) {\scriptsize$\frac{\b_0}{s}$}
	\end{overpic}
	\caption{The graphs of the hyperbola $\tfrac{\b_0}{s}$, in black,  and the curve $\tan (sL)$ for
$L_1=1$, in blue, and for $L_2=0.2$, in red. }
	\label{fig2}
\end{figure}
Since
\begin{equation}
\label{3.8}
  \lim_{L\da 0} \tan(sL)=0 \quad \hbox{uniformly in}\;\; s\in [0,M]\;\; \hbox{for all} \;\; M>0,
\end{equation}
it becomes apparent that the smaller $L>0$ the flatter is the corresponding function $\tan(sL)$, as illustrated by Figure \ref{fig2}, and, so, it needs a larger time to reach the hyperbola $\tfrac{\b_0}{s}$. Consequently,
$\lim_{L\da 0}s(L)=+\infty$, where $s(L)$ stands for the first positive root of \eqref{3.7}. Therefore,
\begin{equation}
\label{3.9}
\lim_{L \da 0} \s_1[-D^2, \b_0,0,(0,L)] = +\infty.
\end{equation}

\par
\medskip
\noindent \emph{Case $\b_0<0$.} Now, $\s_1(L)<0$ and the principal eigenfunction $\psi_1$ is given by
\begin{equation}
\label{3.10}
\psi_1(x) = A \cosh\left(\sqrt{-\s_1(L)}\,x\right) + B \sinh\left(\sqrt{-\s_1(L)}\,x\right),\quad x\in [0,L],
\end{equation}
for the appropriate values of $A$ and $B$. As in the previous cases, imposing the boundary operators $\mc{B}_0$ and $\mc{B}_L$ to $\psi_1$, we find that $\s_1(L)<0$ is the unique negative root of
\begin{equation}
\tanh \left(\sqrt{-\s_1(L)}\,L\right) = -\tfrac{\b_0}{\sqrt{-\s_1(L)}},
\end{equation}
which can be expressed, equivalently, as
$$
\tanh \left(sL\right) = -\tfrac{\b_0}{s},
$$
where $s:= \sqrt{-\s_1(L)}>0$. Figure \ref{fig3} plots the graphs of the hyperbola $-\tfrac{\b_0}{s}$ and the function $\tanh(sL)$ for two values of $L$.

\begin{figure}[h!]
	\centering
	\begin{overpic}[scale=0.45]{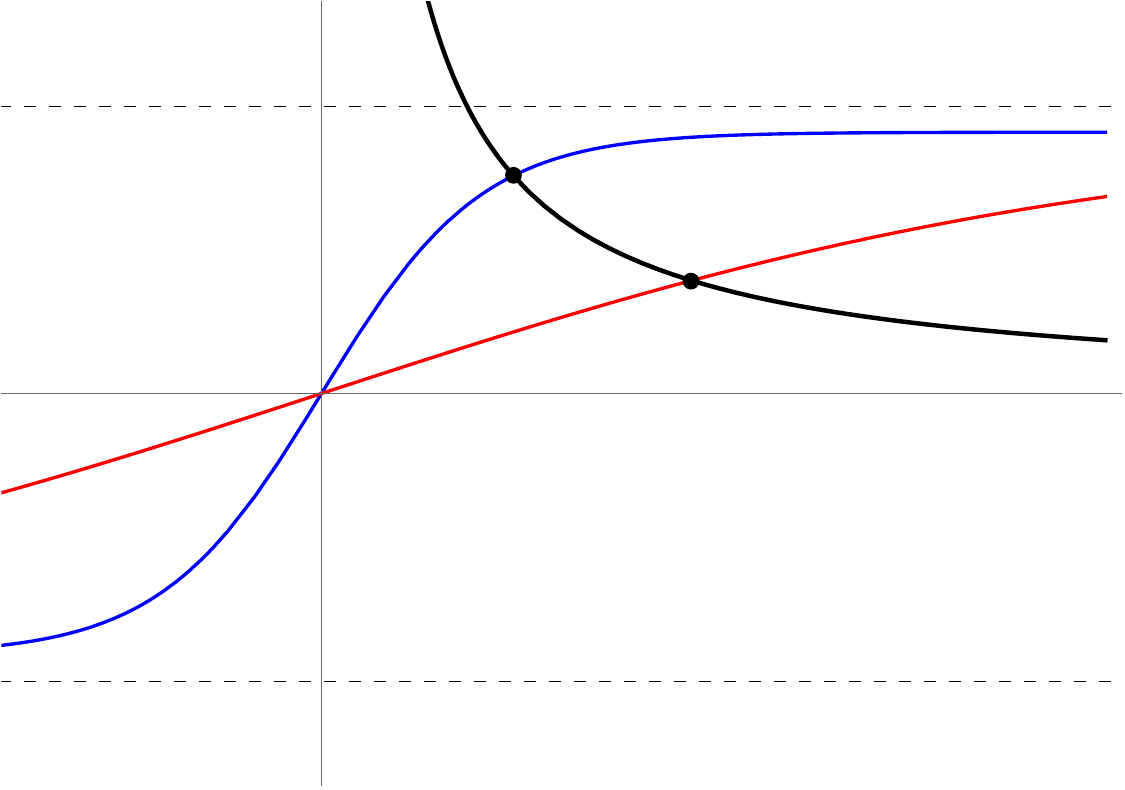}
		\put (35.7,54) {\tiny$s(L_1)$}
		\put (58,47.5) {\tiny$s(L_2)$}
		\put (102,34) {\scriptsize$s$}
		\put (100,58) {\scriptsize$\tanh \left(sL_1\right)$}
		\put (100,52) {\scriptsize$\tanh \left(sL_2\right)$}
		\put (32,73) {\scriptsize$-\frac{\b_0}{s}$}
	\end{overpic}
	\caption{The plots of the hyperbola $-\tfrac{\b_0}{s}$, in black, and $\tanh(sL)$ for $L_1=1$, in blue, and $L_2=0.2$, in red.}
	\label{fig3}
\end{figure}

Since
\begin{equation}
\label{3.11}
  \lim_{L\da 0} \tanh(sL)=0 \quad \hbox{uniformly in}\;\; s\in [0,M]\;\; \hbox{for all} \;\; M>0,
\end{equation}
it becomes apparent that  $\lim_{L\da 0}s(L)=+\infty$ (see Figure \ref{fig3}). Therefore,
\begin{equation}
\label{3.12}
\lim_{L \da 0} \s_1[-D^2, \b_0,0,(0,L)] = -\infty.
\end{equation}

\subsection{Case 5: $\mc{B}_0 = \mc{N}$ and $\mc{B}_L = \mc{R}_\b$, with $\b_\o \in \R \setminus \{0\}$}

As in Case 3, by Proposition \ref{pr2.3}, we have that
$$
   \s_1[-D^2, 0,\b_\o,(0,L)]= \s_1[-D^2, \b_\o,0,(0,L)]
$$
and, hence, it follows from Case  4 that
$$
\lim_{L \da 0} \s_1[-D^2, 0,\b_\o,(0,L)] = \left\{\begin{array}{l} +\infty \quad \hbox{if} \; \; \b_\o >0,\\
-\infty \quad \hbox{if} \; \; \b_\o <0.\end{array}\right.
$$

\subsection{Case 6: $\mc{B}_0 = \mc{B}_L = \mc{R}_\b$, with $\b_0,\b_\o \in \R \setminus \{0\}$}
In this case, \eqref{1.3} reads as
\begin{equation}
\label{3.13}
	\left \{ \begin{array}{ll}
		-u''=\s u \qquad  \hbox{in} \;\; [0,L],\\[1ex]
		-u'(0) + \b_0 u(0)= 0, \quad u'(L) +\b_\o u(L) = 0, \end{array} \right.
\end{equation}
with $\b_0,\b_\o \in \R \setminus \{0\}$. First, suppose that \eqref{3.13} admits the eigenvalue $\s = 0$. Then, since $u(x) = Ax +B$ for some constants $A,B \in\R$, the boundary conditions entail that
\begin{equation}
\label{3.14}
\b_0 +\b_\o\left(\b_0 L +1\right) = 0,
\end{equation}
and, hence,
$$
\b_\o = \tfrac{-\b_0}{\b_0 L +1} \quad \hbox{and} \quad u(x) = B(\b_0 x+1)
$$
for all $x \in [0,L]$ and $B \in\R$. Note that $\b_0 L +1 \neq 0$, because, by \eqref{3.14}, $\b_0 L +1 =0$ implies
that $\b_0=0$, which contradicts the assumption $\b_0\neq 0$.
\par
Suppose $\b_0>0$. Then, for every $B>0$, $u(x)>0$ for all $x\in [0,L]$. Thus, by the uniqueness of the principal eigenvalue,
\begin{equation}
\label{3.15}
\s_1\left[-D^2,\b_0,\tfrac{-\b_0}{\b_0 L +1},(0,L)\right] = 0 \quad \hbox{for all} \; \; L >0.
\end{equation}
Suppose $\b_0<0$. Then, for every $L<-\frac{1}{\b_0}$ and $B >0$, we also have that $u(x) >0$ for all $x\in [0,L]$. Hence, once again by the uniqueness of the principal eigenvalue,
$$
\s_1\left[-D^2,\b_0,\tfrac{-\b_0}{\b_0 L +1},(0,L)\right] = 0 \quad \hbox{for all} \; \; L \in \left(0,-\tfrac{1}{\b_0}\right).
$$
Subsequently, we will differentiate several situations, depending on the signs of $\b_0$ and $\b_\o$.
\par
\vspace{0.2cm}
\noindent \emph{Case $\b_0 >0$.} Then, since $\tfrac{-\b_0}{\b_0 L +1}>-\b_0$, by Proposition \ref{pr2.1}, it follows from \eqref{3.15} that
\begin{equation}
\label{3.16}
\s_1(L):=\s_1[-D^2,\b_0,\b_\o,(0,L)]<0 \quad \hbox{if} \; \; \b_\o \leq -\b_0.
\end{equation}
Moreover, since
\begin{equation}
\label{3.17}
\tfrac{-\b_0}{\b_0 L +1} \da -\b_0 \quad \hbox{as} \; \; L \da 0,
\end{equation}
when $\b_\o>-\b_0$, there exists $L_0=L_0(\b_0,\b_\o)>0$ such that
$$
\b_\o > \tfrac{-\b_0}{\b_0 L+1}\quad \hbox{for all} \; \; L\in(0,L_0).
$$
Thus, again by Proposition \ref{pr2.1}, it follows from \eqref{3.15} that
\begin{equation}
\label{3.18}
\s_1(L):=\s_1[-D^2,\b_0,\b_\o,(0,L)]>0 \quad
\hbox{if} \; \; \b_\o > -\b_0 \quad \hbox{for all} \; \; L < L_0.
\end{equation}
Suppose that $\b_\o \geq 0$. Then, by Proposition \ref{pr2.1}, \eqref{3.9} yields to
$$
\lim_{L \da 0} \s_1[-D^2,\b_0,\b_\o,(0,L)]\geq \lim_{L \da 0} \s_1[-D^2,\b_0,0,(0,L)] = +\infty.
$$
Now, suppose that $-\b_0 < \b_\o <0$. Then, owing \eqref{3.18}, $\psi_1$ is given by \eqref{3.6} for appropriate
values of $A,B \in \R$. Hence, imposing the boundary conditions of \eqref{3.13}, taking into account that
$$
   \b_0\b_\o-\s_1(L)<0\;\;\hbox{for all}\;\; L\in (0,L_0),
$$
and adapting the argument of Case 2,  it follows that $\s_1(L)$ must be the first positive solution of
$$
\tan\left(\sqrt{\s_1(L)}\,L\right) = \frac{\sqrt{\s_1(L)}\left(\b_0+\b_\o\right)}{\s_1(L) -\b_0\b_\o}.
$$
Setting $s:=\sqrt{\s_1(L)}$, this equation becomes
$$
  \tan (sL)= \frac{s(\b_0+\b_\o)}{s^2-\b_0\b_\o}.
$$
Since the function
$$
   f(s):=\frac{s(\b_0+\b_\o)}{s^2-\b_0\b_\o}, \quad s\geq 0,
$$
satisfies $f(0)=0$, $f(s)>0$ for all $s>0$,
$\lim_{s\ua +\infty}f(s)=0$ and
$$
  f'(0)=-\frac{\b_0+\b_\o}{\b_0\b_\o}>0,
$$
it follows from \eqref{3.8} that
$$
\lim_{L \da 0} \s_1[-D^2,\b_0,\b_\o,(0,L)] = +\infty.
$$
Suppose that $\b_\o \leq -\b_0$. Then, thanks to \eqref{3.16}, $\psi_1$ is given by \eqref{3.10} for the appropriate values of $A ,B \in\R$. Assume, in addition, that $\b_\o = -\b_0$. Then, imposing the boundary conditions of \eqref{3.13} yields to
$$
\left(\s_1(L)+\b_0^2\right) \sinh\left(\sqrt{-\s_1(L)}\,L\right) = 0,
$$
and, hence, we have that
$$
\s_1(L)=\s_1[-D^2,\b_0,-\b_0,(0,L)]  = -\b_0^2 = \b_0\b_\o <0 \quad \hbox{for all} \; \; L>0.
$$
Subsequently, we assume that $\b_\o < -\b_0$. Then, the boundary conditions imply that $\s_1(L)$ is the unique positive solution of the equation
$$
\tanh\left(\sqrt{-\s_1(L)}L\right) = \frac{\sqrt{-\s_1(L)}\left(\b_0+\b_\o\right)}{\s_1(L) - \b_0\b_\o}.
$$
Finally, setting $s:=\sqrt{-\s_1(L)}$, this equation becomes
\begin{equation}
\label{3.19}
  \tanh (sL)= -\frac{s(\b_0+\b_\o)}{s^2+\b_0\b_\o}=\frac{s|\b_0+\b_\o|}{s^2+\b_0\b_\o},\quad s>0.
\end{equation}
In this case, since $\b_0\b_\o<0$, the function
$$
   g(s):=\frac{s|\b_0+\b_\o|}{s^2+\b_0\b_\o}, \quad s\geq 0,
$$
has a vertical asymptote at $s=s_0\equiv \sqrt{-\b_0\b_\o}>0$, and $g(s)<0$ for all $0<s<s_0$, whereas
$g(s)>0$ for all $s>s_0$. Thus, having a glance at the graphs of these functions
 it is easily realized that \eqref{3.19} has a unique positive root, $s>s_0$, and it follows from \eqref{3.11} that $s=s(L)\ua +\infty$ if $L\da 0$. Therefore,
$$
\lim_{L \da 0} \s_1[-D^2,\b_0,\b_\o,(0,L)] = -\infty.
$$
\par
\medskip
\noindent
\emph{Case $\b_0 <0$.} Thanks to \eqref{3.17} and reasoning as in the case $\b_0>0$, it becomes apparent that
\eqref{3.16} and \eqref{3.18} again hold true for all $L < -\tfrac{1}{\b_0}$. Suppose, in addition, that $\b_\o \leq 0$. Then, by Proposition \ref{pr2.1}, \eqref{3.12} implies that
$$
\lim_{L \da 0} \s_1[-D^2,\b_0,\b_\o,(0,L)] = -\infty \quad \hbox{for all} \; \; \b_\o \leq 0.
$$
So, we are done. Now, assume that $\b_\o>0$. Then, arguing as in case $\b_0 >0$, and again
based on \eqref{3.8} and \eqref{3.11}, it is easily realized that
$$
\lim_{L \da 0} \s_1[-D^2,\b_0,\b_\o,(0,L)] = +\infty \quad \hbox{if} \; \; \b_\o > -\b_0
$$
and
$$
\lim_{L \da 0} \s_1[-D^2,\b_0,\b_\o,(0,L)] = -\infty \quad \hbox{if} \; \; 0<\b_\o <-\b_0.
$$
By repetitive, the technical details of the proofs are omitted here. Moreover, Proposition \ref{pr2.3} entails that
$$
\s_1[-D^2,\b_0,-\b_0,(0,L)] = -\b_0^2 <0 \quad \hbox{for all} \; \; L>0 \quad \hbox{if} \; \; \b_\o = -\b_0.
$$
This ends the proof of Theorem \ref{th1.1}.

\section{Proof of  Theorem \ref{th1.2}}
\label{sec4}

\subsection{Proof of (\ref{1.4})}
The proof of \eqref{1.4} relies primarily on the Faber--Krahn inequality established by  Daners in \cite{D06}, and it uses a number of
comparisons and auxiliary results of a rather technical nature.  The main goal is analyzing the limiting behavior
of $\s_1[-\D,\b,\O]$ as $|\O|\da 0$ in the special case when
\begin{equation}
\label{4.1}
   \b_\ell:=\inf_{D}\b >0.
\end{equation}
This entails that, for every bounded domain $\O \subset D$ with $\bar \O \subset D$,
$$
  \min_{\p\O}\b \geq \b_\ell >0.
$$
Note that $ \min_{\p\O}\b$ is attaint because $\p \O\subset D$ is compact.
Let $\v_1\gg 0$ be a principal eigenfunction associated to $\s_1[-\D,\b,\O]$. Then,
\begin{equation}
	\left \{ \begin{array}{ll}
		-\D \v_1= \s_1[-\D,\b,\O] \v_1 \quad & \hbox{in} \;\; \O,\\[1ex]
		\frac{\p \v_1}{\p n} + \b \v_1 = 0 \quad & \hbox{on} \; \; \p \O. \end{array} \right.
\end{equation}
 Thanks to \eqref{4.1}, it follows from Proposition \ref{pr2.2} that
\begin{equation}
\label{4.2}
\s_1[-\D,\b,\O] \geq  \s_1[-\D,\b_\ell,\O]> \s_1[-\D,0,\O]=0.
\end{equation}
Moreover, by Theorem 1.1 of Daners \cite{D06}, the ball of radius $R>0$, $B_R$, has the smallest principal eigenvalue within all the bounded domains $\O$ such that $|\O| = |B_R|$. In other words,
\begin{equation}
\label{4.3}
\s_1[-\D,\b_\ell,\O] \geq \s_1[-\D,\b_\ell,B_R] \quad \hbox{if} \; \; |\O| = |B_R|,
\end{equation}
where $\s_1[-\D,\b_\ell,B_R] >0$ is the principal eigenvalue of the problem
\begin{equation}
\label{4.4}
	\left \{ \begin{array}{ll}
		-\D \t= \s \t \quad & \hbox{in} \;\; B_R,\\[1ex]
		\frac{\p \t}{\p n} + \b_\ell \t= 0 \quad & \hbox{on} \; \; \p B_R. \end{array} \right.
\end{equation}
Consequently, owing to \eqref{4.2} and \eqref{4.3}, in order to prove \eqref{1.4} it suffices to show that
\begin{equation}
\label{4.5}
\lim_{R \da 0} \s_1[-\D,\b_\ell,B_R] = +\infty
\end{equation}
because, since $|\O| = |B_R|$, $R \da 0$ entails $|\O| \da 0$.
\par
To prove \eqref{4.5} we can proceed as follows.  Let $\t\gg0$ be a principal eigenfunction associated to $\s_1[-\D,\b_\ell,B_R]$. As the support domain is a ball, $\t(x)$ must be radially symmetric. Thus, there exists a
function $\xi(r)$, with $\xi'(0) = 0$, such that
$$
  \t(x)= \xi(r), \quad \hbox{with} \; \; r := |x| = \sqrt{x_1^2 + \cdots + x_N^2}.
$$
So, for every $r \in (0,R]$ and $j\in\{1,...,N\}$, we have that
\begin{equation}
\tfrac{\p \t}{\p x_j}(x) = \xi'(r) \tfrac{\p r}{\p x_j} = \xi'(r) \tfrac{x_j}{r},\quad
\tfrac{\p^2 \t}{\p x_j^2}(x)=\xi''(r)\left(\tfrac{x_j}{r}\right)^2+\xi'(r)\frac{r-\frac{x_j^2}{r}}{r^2},
\end{equation}
and, hence, it is apparent that
$$
\nabla_x \t(x) = \xi'(r) \tfrac{x}{r}, \qquad \D_x \t(x) = \xi''(r) + \tfrac{N-1}{r}\xi'(r).
$$
Furthermore, $r=R$ if $x \in \p B_R$. Thus,
$$
\tfrac{\p \t}{\p n}(x) = \left\langle \nabla_x \t(x), \tfrac{x}{R}\right\rangle = \left\langle \xi'(R)\tfrac{x}{R}, \tfrac{x}{R}\right\rangle = \xi'(R).
$$
Consequently, \eqref{4.4} becomes into the next one-dimensional problem
\begin{equation}
\label{4.6}
	\left \{ \begin{array}{ll}
		 -\xi''(r) - \tfrac{N-1}{r}\xi'(r) =\s_1[-\D,\b_\ell,B_R] \xi(r) \qquad  \hbox{in} \;\; (0,R],\\[1ex]
		\xi'(0)= 0, \quad  \xi'(R) + \b_\ell \xi(R) = 0. \end{array} \right.
\end{equation}
\par
Next, we consider the associated one-dimensional eigenvalue problem
\begin{equation}
\label{4.7}
	\left \{ \begin{array}{ll}
		-\zeta'' =\s_1[-D^2,0,\b_\ell,(0,R)] \zeta \qquad  \hbox{in} \;\; [0,R],\\[1ex]
		\zeta'(0)= 0, \quad \zeta'(R) + \b_\ell \zeta(R) = 0,\end{array} \right.
\end{equation}
where the convection term in \eqref{4.6} is removed. Let $\zeta\gg 0$ be the principal eigenfunction of
\eqref{4.7} such that $\zeta(0)=1$. We claim that
\begin{equation}
\label{4.8}
\zeta'(r) < 0 \quad \hbox{for all} \; \; r \in (0,R].
\end{equation}
Indeed, since $\b_\ell>0$, it follows from Proposition \ref{pr2.1} that
$$
   \s_1(R):=\s_1[-D^2,0,\b_\ell,(0,R)] > \s_1[-D^2,0,0,(0,R)]= 0.
$$
Thus, $\zeta$ is given through
$$
\zeta(r)=A \cos\left(\sqrt{\s_1(R)}\,r\right) + B \sin\left(\sqrt{\s_1(R)}\,r\right)
$$
for some constants $A,B \in\R$. As $\zeta'(0)=0$, necessarily
$$
\zeta(r) = \cos\left(\sqrt{\s_1(R)}\,r\right),\quad r\in [0,R].
$$
Therefore, since $\zeta(0)=1$ and $\zeta(r) >0$ for all $r \in [0,R]$, \eqref{4.8} holds.
\par
Thanks to \eqref{4.8}, we find that
$$
 - \zeta''(r)-\tfrac{N-1}{r}\zeta'(r) - \s_1[-D^2,0,\b_\ell,(0,R)] \zeta(r) > -\zeta''(r) - \s_1[-D^2,0,\b_\ell,(0,R)]\zeta(r) =0
$$
for all $r \in (0,R]$. Consequently, $\zeta(r)$ is a positive strict supersolution of the problem
\begin{equation}
	\left \{ \begin{array}{ll}
		-u''-\tfrac{N-1}{r}u' =\s_1[-D^2,0,\b_\ell,(0,R)] u \qquad \hbox{in} \;\; (0,R],\\[1ex]
		u'(0)= 0, \quad u'(R) + \b_\ell u(R) = 0. \end{array} \right.
\end{equation}
Thus, it follows from Theorem 7.10 of \cite{LG13} that
$$
  \s_1\left[-D^2-\tfrac{N-1}{r}D-\s_1[-D^2,0,\b_\ell,(0,R)],0,\b_\ell,(0,R)\right]>0.
$$
Hence,  due to \eqref{4.6}, we find that
\begin{equation}
\label{4.9}
\s_1[-\D,\b_\ell,B_R] > \s_1[-D^2,0,\b_\ell,(0,R)].
\end{equation}
Finally, since $\b_\ell >0$, Theorem \ref{th1.1} guarantees that
$$
\lim_{R \da 0}\s_1[-D^2,0,\b_\ell,(0,R)] = +\infty.
$$
Therefore, \eqref{4.5} holds from \eqref{4.9}. This ends the proof of \eqref{1.4}.
\par
Note that, owing to Proposition 8.1 of \cite{LG13}, we already know that
$$
\s_1[-\D,\b,\O] < \s_1[-\D,\mc{D},\O] \quad \hbox{for all} \; \; \b \in \mc{C}(D).
$$
Consequently, \eqref{1.4} implies that
\begin{equation}
  \lim_{|\O|\da 0}\s_1[-\D,\mc{D},\O]=+\infty.
\end{equation}

\subsection{Proof of (\ref{1.5})}
Throughout this section,
$$
   \b_m:=\sup_{D}\b <0.
$$
This implies that, for every bounded domain $\O$ of $D$, with $\bar \O \subset D$, one has that
\begin{equation}
\label{4.10}
   \max_{\p\O}\b \leq \sup_D \b =\b_m <0.
\end{equation}
Let $\v_1\gg 0$ be a principal eigenfunction associated to $\s_1[-\D,\b,\O]$. Then,
\begin{equation}
	\left \{ \begin{array}{ll}
		-\D \v_1= \s_1[-\D,\b,\O] \v_1 \quad & \hbox{in} \;\; \O,\\[1ex]
		\tfrac{\p \v_1}{\p n} + \b \v_1 = 0 \quad & \hbox{on} \; \; \p \O. \end{array} \right.
\end{equation}
Thanks to Proposition \ref{pr2.2},  it follows from \eqref{4.10}  that
$$
\s_1[-\D,\b,\O] \leq  \s_1[-\D,\b_m,\O]<\s_1[-\D,0,\O] = 0.
$$
Thus, to prove \eqref{1.5}, it suffices to show that
\begin{equation}
\label{4.11}
\lim_{|\O| \da 0}  \s_1[-\D,\b_m,\O] = -\infty.
\end{equation}
According to Theorems 3.1 and 4.1 of Nardi \cite{N14}, the boundary value problem
\begin{equation}
\label{4.12}
	\left \{ \begin{array}{ll}
		-\D \phi = -1 \quad & \hbox{in} \;\; \O,\\[1ex]
		\tfrac{\p \phi}{\p n} = \phi_0 >0 \quad   & \hbox{on} \;\; \p \O,  \end{array} \right.
\end{equation}
where $\phi_0$ is a constant, possesses a solution $\phi \in \mc{C}^{2,\a}(\bar{\O})$, unique up to an additive constant, if and only if
\begin{equation}
\label{4.13}
\phi_0 = \frac{|\O|}{{\rm Area}(\p \O)}.
\end{equation}
Suppose \eqref{4.13}, let $\phi$ be a solution of \eqref{4.12}, and consider the function
\begin{equation}
\label{4.14}
\vt_1:= \frac{\Phi_1}{h} \quad \hbox{with} \; \; h := e^{M \phi},
\end{equation}
where $\Phi_1\gg 0$ is any principal eigenfunction associated to $\s_1[-\D,\b_m,\O]$ and $M>0$ is an arbitrary constant
to be chosen later. Then,
\begin{equation}
\label{4.15}
	\left \{ \begin{array}{ll}
		-\D \Phi_1= \s_1[-\D,\b_m,\O] \Phi_1 \quad & \hbox{in} \;\; \O,\\[1ex]
		\frac{\p \Phi_1}{\p n} + \b_m \Phi_1 = 0 \quad & \hbox{on} \; \; \p \O, \end{array} \right.
\end{equation}
and differentiating \eqref{4.14} yields to $\nabla h= Mh \nabla \phi$. Thus,
\begin{equation}
\label{4.16}
\D h = {\rm div}\left(Mh \nabla \phi\right) = M \langle \nabla h, \nabla \phi \rangle + Mh\D\phi = M^2 h |\nabla \phi|^2 +Mh \D \phi.
\end{equation}
On the other hand,
\begin{equation}
\label{4.17}
\begin{split}
  \s_1[-\D,\b_m,\O] \vt_1 h & = \s_1[-\D,\b_m,\O] \Phi_1 = - \D \Phi_1 =-\D(h\vt_1) \\ &
  =-h \D \vt_1 -2 \langle \nabla h, \nabla \vt_1 \rangle - \vt_1 \D h
\end{split}
\end{equation}
and, hence, since $h>0$ in $\O$, it follows from \eqref{4.16} and \eqref{4.17} that
\begin{equation}
\label{4.18}
-\D \vt_1 -2M \langle \nabla \phi, \nabla \vt_1 \rangle - M \left(M |\nabla \phi|^2 + \D \phi \right)\vt_1  = \s_1[-\D,\b_m,\O]  \vt_1.
\end{equation}
Moreover, on $\p\O$, we have that
\begin{align*}
\tfrac{\p \Phi_1}{\p n} &  = \langle\nabla\Phi_1,n\rangle = \langle\nabla(\vt_1h),n\rangle \\ & = \langle \vt_1Mh\nabla\phi+h\nabla\vt_1,n\rangle = \vt_1 Mh \tfrac{\p \phi}{\p n}+ h \tfrac{\p \vt_1}{\p n}.
\end{align*}
Consequently, since $h>0$ on $\p\O$, it follows from the boundary condition
of \eqref{4.15} that
\begin{equation}
\label{4.19}
\tfrac{\p \vt_1}{\p n} + \left(M \; \tfrac{\p \phi}{\p n} + \b_m\right) \vt_1= 0.
\end{equation}
Therefore, thanks to \eqref{4.18} and \eqref{4.19}, we find that
\begin{equation}
\label{4.20}
	\left \{ \begin{array}{ll}
		-\D\vt_1 -2M \langle \nabla \phi, \nabla \vt_1 \rangle - M \left(M |\nabla \phi^2| + \D\phi \right)\vt_1= \s_1[-\D,\b_m,\O] \vt_1 \quad & \hbox{in} \;\; \O,\\[2ex]
		\tfrac{\p \vt_1}{\p n} + \left(M \; \tfrac{\p \phi}{\p n}+ \b_m \right) \vt_1= 0 \quad & \hbox{on} \; \; \p \O. \end{array} \right.
\end{equation}
Since $\phi$ solves \eqref{4.12}, choosing
$$
M := -\b_m \left(\tfrac{\p \phi}{\p n}\right)^{-1} = -\tfrac{\b_m}{\phi_0} >0,
$$
it is apparent that the problem \eqref{4.20} becomes
\begin{equation}
\label{4.21}
\left \{ \begin{array}{ll}
		-\D\vt_1 + 2 \tfrac{\b_m}{\phi_0} \langle \nabla \phi, \nabla \vt_1 \rangle + \tfrac{\b_m}{\phi_0} \left(-\tfrac{\b_m}{\phi_0} |\nabla \phi^2| + 1\right)\vt_1= \s_1[-\D,\b_m,\O] \vt_1 \quad & \hbox{in} \;\; \O,\\[1ex]
		\tfrac{\p \vt_1}{\p n} = 0 \quad & \hbox{on} \; \; \p \O. \end{array} \right.
\end{equation}
Let denote by $\nu_1[\b_m,\O]$ the principal eigenvalue of
\begin{equation}
	\left \{ \begin{array}{ll}
		-\D u + 2 \tfrac{\b_m}{\phi_0} \langle \nabla \phi, \nabla u \rangle + \tfrac{\b_m}{\phi_0} u = \nu u \quad & \hbox{in} \;\; \O,\\[1ex]
		\tfrac{\p u} {\p n} = 0 \quad & \hbox{on} \; \; \p \O.\end{array} \right.
\end{equation}
As  $\tfrac{\b_m}{\phi_0}$ is a constant,
$$
  \nu_1[\b_m,\O]= \tfrac{\b_m}{\phi_0}
$$
and the constant function $1$ provides us with a principal eigenfunction of $\nu_1[\b_m,\O]$. Moreover, since
$$
\tfrac{\b_m}{\phi_0} > \tfrac{\b_m}{\phi_0} \left(-\tfrac{\b_m}{\phi_0} |\nabla \phi^2| + 1\right),
$$
we can infer from \eqref{4.21} that
\begin{equation}
	\left \{ \begin{array}{ll}
		\left( -\D  + 2 \tfrac{\b_m}{\phi_0} \langle \nabla \phi, \nabla  \rangle + \tfrac{\b_m}{\phi_0} -\s_1[-\D,\b_m,\O]\right) \vt_1>0  \quad & \hbox{in} \;\; \O,\\[1ex]
		\tfrac{\p \vt_1}{\p n} = 0 \quad & \hbox{on} \; \; \p \O. \end{array} \right.
\end{equation}
Consequently, thanks to Theorem 7.10 of \cite{LG13}, taking into account \eqref{4.13}, we find that
\begin{equation}
\nu_1[\b_m,\O]= \tfrac{\b_m}{\phi_0}=\b_m \tfrac{{\rm Area}(\p \O)}{|\O|} > \s_1[-\D,\b_m,\O].
\end{equation}
As $\b_m<0$, to prove \eqref{4.11} it suffices to show that
\begin{equation}
\label{4.22}
\lim_{|\O| \da 0} \; \frac{{\rm Area}(\p \O)}{|\O|} = +\infty.
\end{equation}
A proof of \eqref{4.22} can be derived as follows. According to the so called \emph{isoperimetric inequality}
(see, e.g., Ritor\'e and Sinestrari \cite{RS}), since $\O$ is of class $\mc{C}^1$,
$$
{\rm Area}(\p \O) \geq N \o_N^\frac{1}{N} |\O|^{\frac{N-1}{N}},
$$
where $\o_N:= \tfrac{\pi^{\frac{N}{2}}}{\Gamma\left(\frac{N}{2}+1\right)}$ stands for the Lebesgue measure of the unit ball in $\R^N$ ($\G$ denotes the Euler Gamma function). Thus,
$$
\frac{{\rm Area}(\p \O)}{|\O|} \geq N \o_N^{\frac{1}{N}} |\O|^{-\frac{1}{N}}.
$$
Therefore, letting $|\O|\da 0$, \eqref{4.22} holds true. This ends the proof of \eqref{1.5}.

\section{Asymptotic expansion of $\s_1[-\D,\b_\ell,B_R]$ as $R\da 0$ in case $\b_\ell>0$}
Let $\Phi_1\gg 0$ be a principal eigenfunction associated to $\s_1[-\D,\b_\ell,B_R]$. Then,
\begin{equation}
\label{5.1}
	\left \{ \begin{array}{ll}
		-\D \Phi_1=\s_1[-\D,\b_\ell,B_R] \Phi_1 \quad & \hbox{in} \;\; B_R,\\[1ex]
		\tfrac{\p \Phi_1}{\p n} + \b_\ell \Phi_1 = 0 \quad & \hbox{on} \; \; \p B_R. \end{array} \right.
\end{equation}
Moreover, the change of variable
\begin{equation}
\Psi_1(y) := \Phi_1(x), \qquad y := \tfrac{x}{R},
\end{equation}
transforms \eqref{5.1} into
\begin{equation}
\label{5.2}
	\left \{ \begin{array}{ll}
		-\D \Psi_1=R^2 \s_1[-\D,\b_\ell,B_R] \Psi_1 \quad & \hbox{in} \;\; B_1,\\[1ex]
		\tfrac{\p \Psi_1}{\p n} + \b_\ell R \Psi_1 = 0 \quad & \hbox{on} \; \; \p B_1. \end{array} \right.
\end{equation}
Since $\Psi_1\gg 0$, by the uniqueness of the principal eigenvalue, it becomes apparent that
\begin{equation}
\label{5.3}
\Sigma(R) := \s_1[-\D,\b_\ell R,B_1] = R^2 \s_1[-\D,\b_\ell,B_R] >0.
\end{equation}
The main result of this section can be stated as follows.

\begin{theorem}
\label{th5.1}
The mapping $R \mapsto \Sigma(R)$ is real analytic. Moreover,
\begin{equation}
\Sigma(R) = \b_\ell \frac{{\rm Area}(\p B_1)}{|B_1|}R + O(R^2)\quad \hbox{as}\;\;
R\da 0.
\end{equation}
Thus,
$$
  \lim_{R\da 0} \frac{\Sigma(R)}{R} =\b_\ell \frac{{\rm Area}(\p B_1)}{|B_1|},
$$
and, thanks to \eqref{5.3},
$$
  \s_1[-\D,\b_\ell,B_R]= \frac{\Sigma(R)}{R^2}=\b_\ell \frac{{\rm Area}(\p B_1)}{|B_1|}\frac{1}{R}+O(1)
  \quad \hbox{as}\;\; R\da 0.
$$
\end{theorem}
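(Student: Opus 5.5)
The plan is to regard $\Sigma(R)=\s_1[-\D,\b_\ell R,B_1]$ as a perturbation, at $\tau:=\b_\ell R=0$, of the Neumann eigenvalue $\s_1[-\D,0,B_1]=0$. The unperturbed eigenvalue is algebraically simple and has the constant function $1$ as principal eigenfunction. Set $\Lambda(\tau):=\s_1[-\D,\tau,B_1]$ for $\tau$ in a neighbourhood of $0$ in $\R$, and apply the Crandall--Rabinowitz implicit function argument. Define
$$
\mathfrak{F}:\R\times\R\times Y\to \mc{C}^{\a}(\bar B_1)\times \mc{C}^{1,\a}(\p B_1)\times\R,\qquad
\mathfrak{F}(\tau,\s,u):=\Big(-\D(1+u)-\s(1+u),\; \tfrac{\p u}{\p n}+\tau(1+u),\; \textstyle\int_{B_1}u\Big),
$$
with $Y=\mc{C}^{2,\a}(\bar B_1)$.

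\emph{Analyticity.} The map $\mathfrak{F}$ is polynomial, hence real analytic, in $(\tau,\s,u)$, and $\mathfrak{F}(0,0,0)=0$. Its derivative with respect to $(\s,u)$ at $(0,0,0)$ is
$$
(\hat\s,v)\mapsto(-\D v-\hat\s,\ \p_n v,\ \textstyle\int v).
$$
This is an isomorphism, by Fredholm theory for the Neumann problem. Given data $(f,g,c)$, the compatibility condition $\int_{B_1} f+\hat\s|B_1|=\int_{\p B_1}g$ determines $\hat\s$ uniquely. The Neumann problem is then solvable, and its solution is unique up to constants, which are fixed by $\int v=c$.

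The analytic implicit function theorem therefore yields analytic curves $\tau\mapsto(\s(\tau),u(\tau))$ with $\s(0)=0$ and $u(0)=0$. For small $|\tau|$ the function $1+u(\tau)$ stays close to $1$ in $\mc{C}^{2,\a}$, so it is positive on $\bar B_1$. By the uniqueness of the principal eigenvalue (the eigenvalue admitting a positive eigenfunction, as recalled in Section 2), $\s(\tau)=\Lambda(\tau)$. Hence $\Sigma(R)=\Lambda(\b_\ell R)$ is analytic near $R=0$.

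For $R>0$ not small, analyticity follows in the same way by linearizing at $(\tau_0,\Lambda(\tau_0),\v_{\tau_0})$. The eigenvalue $\Lambda(\tau_0)$ is algebraically simple by \cite{LG13}, and the only new ingredient is the standard transversality of the simple eigenvalue. I expect the main technical obstacle to be precisely this functional-analytic set-up, namely checking that the linearization is an isomorphism in the Hölder setting with Robin data. At $\tau=0$ the check reduces to the Neumann compatibility condition above.

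\emph{First-order coefficient.} Differentiate the identities
$$
-\D u(\tau)=\s(\tau)(1+u(\tau)),\qquad \p_n u(\tau)+\tau(1+u(\tau))=0
$$
at $\tau=0$. Writing $\dot u=u'(0)$ and $\dot\s=\s'(0)$, this gives
$$
-\D\dot u=\dot\s\ \hbox{in}\ B_1,\qquad \p_n\dot u=-1\ \hbox{on}\ \p B_1.
$$
Integrating over $B_1$ and applying the divergence theorem yields $\dot\s|B_1|=\mathrm{Area}(\p B_1)$. This is exactly the compatibility condition already used, with $\phi_0$ as in \eqref{4.13}.

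Hence $\Lambda(\tau)=\tau\,\mathrm{Area}(\p B_1)/|B_1|+O(\tau^2)$. Substituting $\tau=\b_\ell R$ gives
$$
\Sigma(R)=\b_\ell\frac{\mathrm{Area}(\p B_1)}{|B_1|}R+O(R^2).
$$
Dividing by $R$ gives the limit, and dividing by $R^2$ through \eqref{5.3} gives the expansion of $\s_1[-\D,\b_\ell,B_R]$.
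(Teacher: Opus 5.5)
Your proof is correct, but both halves take a different route from the paper's.

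\emph{Analyticity.} The paper removes the $R$-dependence from the boundary condition by the gauge change $\vartheta_1=\Psi_1 e^{-\beta_\ell R\phi}$, with $\phi(x)=\frac12 e^{1-|x|^2}$ chosen so that $\partial_n\phi=-1$ on $\partial B_1$. This converts the Robin problem into a Neumann problem whose lower-order coefficients depend analytically on $R$. The paper then applies an abstract perturbation lemma for algebraically simple eigenvalues of a family with fixed domain. You instead keep the Robin condition and put the $\tau$-dependent boundary operator into the target space. The analytic implicit function theorem then applies directly, the invertibility check at $\tau=0$ reduces to Neumann solvability, and no auxiliary $\phi$ is needed.

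\emph{Slope at $0$.} The paper differentiates at general $R$, transfers $\dot\vartheta_1$ back to $\dot\Psi_1$ (the ``tedious manipulations''), and uses Green's second identity. This gives $\dot\Sigma(R)=\beta_\ell\int_{\partial B_1}\Psi_1^2/\int_{B_1}\Psi_1^2$ along the whole curve, which also shows that $\Sigma$ is strictly increasing; only then does it evaluate at $R=0$. You differentiate only at $\tau=0$, where the eigenfunction is constant, so one application of the divergence theorem gives $\dot\sigma(0)=\mathrm{Area}(\partial B_1)/|B_1|$ without any back-substitution.

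\emph{Cost of your route.} Analyticity for $R$ away from $0$ requires redoing the linearization at each $\tau_0$, normalizing for instance by $\int_{B_1}u\,\varphi_{\tau_0}=0$, and you only sketch this. It is routine: multiplying by $\varphi_{\tau_0}$ and using the symmetry of the Robin condition gives $\hat\sigma=0$, simplicity then gives injectivity, and the map is Fredholm of index zero. In any case, the asymptotic statements only use analyticity near $R=0$.

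\emph{One slip.} For $-\Delta v-\hat\sigma=f$, $\partial_n v=g$, the compatibility condition is $\int_{B_1}f+\hat\sigma|B_1|=-\int_{\partial B_1}g$, not $+\int_{\partial B_1}g$. This does not affect the isomorphism claim, and your direct computation of $\dot\sigma$ has the correct sign. However, with the sign as you wrote it, inserting $f=0$, $g=-1$ into ``the compatibility condition already used'' would give $\dot\sigma<0$.
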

\begin{proof}
To show the analyticity of $\Sigma(R)$ we can argue as follows. The function
$$
  \phi(x):=\tfrac{1}{2}e^{1-|x|^2},\qquad x\in\R^N,
$$
is analytic and it satisfies
$$
   \tfrac{\p \phi}{\p n} = -1\quad \hbox{in}\;\; \p B_1,
$$
where $n$ stands for the outward unit
normal at $B_1$. Subsequently, for some constant $M>0$ to be chosen later, we consider the function
\begin{equation}
\label{5.4}
\vt_1:= \frac{\Psi_1}{h} \quad \hbox{with} \; \; h := e^{M \phi}.
\end{equation}
Then, arguing as in Section 4.2, it is apparent that \eqref{5.2} becomes
\begin{equation}
\label{5.5}
	\left \{ \begin{array}{ll}
		-\D \vt_1 -2M \langle \nabla \phi, \nabla \vt_1 \rangle -  \left(M^2 |\nabla \phi|^2 + M \D \phi \right)\vt_1 = \Sigma(R) \vt_1 \quad & \hbox{in} \;\; B_1,\\[1ex]
		\tfrac{\p \vt_1}{\p n} +  \left(-M + \b_\ell R \right)\vt_1 = 0 \quad & \hbox{on} \; \; \p B_1. \end{array} \right.
\end{equation}
Thus, choosing
$$
     M := \b_\ell R >0,
$$
the problem \eqref{5.5} reduces to
\begin{equation}
\label{5.6}
	\left \{ \begin{array}{ll}
		-\D \vt_1 -2\b_\ell R \langle \nabla \phi, \nabla \vt_1 \rangle - \b_\ell \left(\b_\ell R^2 |\nabla \phi|^2 +  R\D \phi \right)\vt_1 = \Sigma(R) \vt_1 \quad & \hbox{in} \;\; B_1,\\[1ex]
		\tfrac{\p \vt_1}{\p n} = 0 \quad & \hbox{on} \; \; \p B_1. \end{array} \right.
\end{equation}
Consequently, since the dependence of the operator in $R$ is analytic and $\Sigma(R)$ is an algebraically simple eigenvalue,  by Lemma 2.1.1 of \cite{LG01}, $\Sigma(R)$ must be analytic in $R\in\R$ (see Kato \cite{K95}, if necessary). Note that the parameter $R$ in \eqref{5.6} does not need to be positive, but can take any real value, much like in the problem
\begin{equation}
\label{5.7}
	\left \{ \begin{array}{ll}
		-\D \Psi_1=\Sigma(R) \Psi_1 \quad & \hbox{in} \;\; B_1,\\[1ex]
		\tfrac{\p \Psi_1}{\p n} + \b_\ell R \Psi_1 = 0 \quad & \hbox{on} \; \; \p B_1. \end{array} \right.
\end{equation}
Only when $R>0$, we are going to have \eqref{5.3}. So, $\Sigma(R)$ stands for the principal eigenvalue of the problem \eqref{5.6} (or \eqref{5.7}). By analyticity, we have that
$$
\Sigma(R) = \Sigma(0) + \dot \Sigma(0)R + O(R^2)\quad \hbox{as}\;\; R\to 0,
$$
where \; \lq\lq$\cdot$\rq\rq\; stands for  derivation with respect to $R$.
Moreover, since $\Sigma(0)=0$,
\begin{equation}
\Sigma(R) = \dot \Sigma(0)R + O(R^2)\quad \hbox{as}\;\; R\to 0.
\end{equation}
To complete the proof of the theorem it remains to show that
\begin{equation}
\label{5.8}
\dot \Sigma(0) = \b_\ell \frac{{\rm Area}(\p B_1)}{|B_1|}.
\end{equation}
According to Lemma 2.1.1 of \cite{LG01}, the eigenfunction $\vt_1\equiv \vt_1(R)$ can be also chosen to be
analytic in $R$. Moreover, differentiating \eqref{5.6} with respect to $R$, $\dot \vt_1(R)$  satisfies $\frac{\p \dot \vt_1}{\p n}(R)=0$ on $\p B_1$, and
\begin{equation}
\label{5.9}
\begin{split}
		-\D \dot \vt_1 -2\b_\ell \langle \nabla \phi, \nabla \vt_1 \rangle & -
2\b_\ell R \langle \nabla \phi, \nabla \dot \vt_1 \rangle
-  \b_\ell \left(\b_\ell 2 R |\nabla \phi|^2 +   \D \phi \right)\vt_1 \\ &
- \b_\ell \left(\b_\ell R^2 |\nabla \phi|^2 +  R \D \phi \right) \dot \vt_1
= \dot \Sigma(R) \vt_1 +\Sigma(R)\dot \vt_1
\end{split}
\end{equation}
in $B_1$. On the other hand, differentiating with respect to $R$ in \eqref{5.4}, we find that
\begin{equation}
\label{5.10}
  \dot \vt_1= \frac{\dot \Psi_1 h-\dot h\Psi_1}{h^2},\qquad \dot h=\b_\ell \phi e^{\b_\ell R \phi}=\b_\ell \phi h.
\end{equation}
Thus, substituting \eqref{5.10} into \eqref{5.9} and the boundary condition $\frac{\p \dot \vt_1}{\p n}=0$, after some straightforward, but rather tedious manipulations,
it becomes apparent that
\begin{equation}
\label{5.11}
	\left \{ \begin{array}{ll}
		-\D \dot \Psi_1= \dot \Sigma(R) \Psi_1 + \Sigma(R) \dot \Psi_1 \quad & \hbox{in} \;\; B_1,\\[1ex]
		\tfrac{\p \dot \Psi_1}{\p n}+ \b_\ell \Psi_1 + \b_\ell R \dot \Psi_1= 0 \quad & \hbox{on} \; \; \p B_1, \end{array} \right.
\end{equation}
which provides us with the formal variational problem from \eqref{5.7} with respect to the parameter $R\in\R$.
\par
Multiplying the differential equation of \eqref{5.11} by $\Psi_1$ and integrating in $B_1$ shows that
\begin{equation}
\label{5.12}
- \int_{B_1} \Psi_1 \D \dot \Psi_1 \, dy -  \Sigma(R) \int_{B_1} \Psi_1 \dot \Psi_1 \, dy = \dot \Sigma(R) \int_{B_1}  \Psi_1^2 \, dy.
\end{equation}
On the other hand, thanks to the Green's second identity,
$$
\int_{B_1} \dot \Psi_1 \D \Psi_1 \, dy -\int_{B_1} \Psi_1 \D \dot \Psi_1 \, dy = \int_{\p B_1} \dot \Psi_1 \tfrac{\p \Psi_1}{\p n} \, dS - \int_{\p B_1} \Psi_1 \tfrac{\p \dot \Psi_1}{\p n} \, dS.
$$
Thus, by the boundary conditions of \eqref{5.2} and \eqref{5.11},
\begin{equation}
\label{5.13}
\int_{B_1} \dot \Psi_1 \D \Psi_1 \, dy  -\int_{B_1} \Psi_1 \D \dot \Psi_1 \, dy  =  \int_{\p B_1} \b_\ell \Psi_1^2 \, dS.
\end{equation}
Consequently, taking into account the differential equation of \eqref{5.11}, it follows from \eqref{5.12} and \eqref{5.13} that
$$
\int_{\p B_1}\b_\ell \Psi_1^2 \, dS = \int_{B_1} \dot \Psi_1 \left(-\D \Psi_1 - \Sigma(R) \Psi_1\right) \, dy + \int_{\p B_1}\b_\ell \Psi_1^2 \, dS =  \dot \Sigma(R) \int_{B_1} \Psi_1^2 \, dy.
$$
Therefore, we can conclude that
\begin{equation}
\label{5.14}
\dot \Sigma(R) = \b_\ell \frac{ \int_{\p B_1} \Psi_1^2 \; dy}{\int_{B_1} \Psi_1^2 \; dy}.
\end{equation}
Finally, normalizing $\Psi_1$ so that
\begin{equation}
\label{5.15}
\int_{B_1} \Psi_1^2 \, dy = 1
\end{equation}
and using that $\Sigma(0) = 0$ and that $\Psi_1$ is constant for $R=0$, we can infer from \eqref{5.15} that actually
\begin{equation}
\label{5.16}
\Psi_1^2= \frac{1}{|B_1|}\quad \hbox{if}\;\; R=0.
\end{equation}
Consequently, it follows from \eqref{5.14}-\eqref{5.16} that
$$
\dot \Sigma(0) = \b_\ell \frac{{\rm Area}(\p B_1)}{|B_1|}.
$$
Thus, \eqref{5.8} holds. This ends the proof.
\end{proof}
\par
Observe that Theorem \ref{th5.1} entails that
$$
\lim_{R \da 0}\s_1[-\D,\b_\ell,B_R]= +\infty,
$$
which provides us with another proof of \eqref{1.4}.
\par

\end{document}